\begin{document}

\newtheorem{lem}{Lemma}
\newtheorem{thm}{Theorem}
\newtheorem{cor}{Corollary}
\newtheorem{exa}{Example}
\newtheorem{con}{Conjecture}
\newtheorem{rem}{Remark}
\newtheorem{obs}{Observation}
\newtheorem{definition}{Definition}
\newtheorem{pro}{Proposition}
\theoremstyle{plain}
\newcommand{\D}{\displaystyle}
\newcommand{\DF}[2]{\D\frac{#1}{#2}}

\renewcommand{\figurename}{{\bf Fig}}
\captionsetup{labelfont=bf}

\title{\bf \Large 3-Rainbow index and forbidden subgraphs\footnote{Supported by NSFC No.11371205 and 11531011, and PCSIRT.}}

\author{{\small Wenjing Li, Xueliang Li, Jingshu Zhang}\\
      {\small Center for Combinatorics and LPMC}\\
      {\small Nankai University, Tianjin 300071, China}\\
       {\small liwenjing610@mail.nankai.edu.cn; lxl@nankai.edu.cn;
       jszhang@mail.nankai.edu.cn}
       }
\date{}

\maketitle
\begin{abstract}
A tree in an edge-colored connected graph $G$ is called \emph{a rainbow tree} if no
two edges of it are assigned the same color. For a vertex subset $S\subseteq V(G)$,
a tree is called an \emph{$S$-tree} if it connects $S$ in $G$.
A \emph{$k$-rainbow coloring} of $G$ is
an edge-coloring of $G$ having the property that
for every set $S$ of $k$ vertices of $G$, there exists a rainbow $S$-tree in $G$.
The minimum number of colors that are needed in a $k$-rainbow coloring of $G$
is the \emph{$k$-rainbow index} of $G$, denoted by $rx_k(G)$.
The \emph{Steiner distance $d(S)$} of a set $S$ of vertices of $G$ is
the minimum size of an $S$-tree $T$.
The \emph{$k$-Steiner diameter $sdiam_k(G)$} of $G$
is defined as the maximum Steiner distance of $S$ among all sets $S$ with $k$ vertices of $G$.
In this paper, we focus on the 3-rainbow index of graphs and find all finite families $\mathcal{F}$ of connected graphs,
for which there is a constant $C_\mathcal{F}$ such that,
for every connected $\mathcal{F}$-free graph $G$, $rx_3(G)\leq sdiam_3(G)+C_\mathcal{F}$.
\\[2mm]

\noindent{\bf Keywords:} rainbow tree, $k$-rainbow index, 3-rainbow index, forbidden subgraphs.\\[2mm]

\noindent{\bf AMS Subject Classification 2010:} 05C15, 05C35, 05C38, 05C40.
\end{abstract}

\section{Introduction}

All graphs considered in this paper are simple, finite, undirected and connected.
We follow the terminology and notation of Bondy and Murty \cite{Bondy} for those not
defined here.

Let $G$ be a nontrivial connected graph with an \emph{edge-coloring c}
$:E(G)\rightarrow \{1,2,\dots,t\}$, $t\in \mathbb{N}$, where adjacent edges may
be colored with the same color. A path in $G$ is called \emph{a rainbow path} if no two edges
of the path are colored with the same color. The graph $G$ is called \emph{rainbow connected}
if for any two distinct vertices of $G$, there is a rainbow path connecting them.
For a connected graph $G$, the \emph{rainbow connection number} of $G$, denoted by $rc(G)$, is defined as the
minimum number of colors that are needed to make $G$ rainbow connected.
These concepts
were first introduced by Chartrand et al. in~\cite{Char1} and have been well-studied since then.
For further details, we refer the reader to a survey paper \cite{LSS} and a book~\cite{Li}.

In~\cite{Char2}, Chartrand et al. generalized the concept of rainbow path to rainbow tree.
A tree in an edge-colored graph $G$
is called \emph{a rainbow tree} if no two edges of it are assigned the same color.
For a vertex subset $S\subseteq V(G)$,
a tree is called an \emph{$S$-tree} if it connects $S$ in $G$.
Let $G$ be a connected graph of order $n$. For a fixed integer $k$ with $2\leq k\leq n$,
a \emph{$k$-rainbow coloring} of $G$ is
an edge-coloring of $G$ having the property that
for every $k$-subset $S$ of $G$, there exists a rainbow $S$-tree in $G$, and
in this case, the graph $G$ is called \emph{$k$-rainbow connected}.
The minimum number of colors that are needed in a $k$-rainbow coloring of $G$
is the \emph{$k$-rainbow index} of $G$, denoted by $rx_k(G)$.
Clearly, $rx_2(G)$ is just the rainbow connection number $rc(G)$ of $G$.
In the sequel, we assume that $k\geq 3$. It is easy to see that
$rx_2(G)\leq rx_3(G)\leq\dots\leq rx_n(G)$.
Recently, some results on the $k$-rainbow index have been published, especially on the 3-rainbow index.
We refer to~\cite{Cai, Chen} for more details.

The \emph{Steiner distance $d(S)$} of a set $S$ of vertices in $G$ is
the minimum size of a tree in $G$ containing $S$.
Such a tree is called a \emph{Steiner $S$-tree} or simply a \emph{Steiner tree}.
The \emph{$k$-Steiner diameter $sdiam_k(G)$} of $G$
is defined as the maximum Steiner distance of $S$ among all $k$-subsets $S$ of $G$.
Then the following observation
is immediate.

\begin{obs}\cite{Char2}\label{obs1}
For every connected graph $G$ of order $n\geq3$ and each integer
$k$ with $3\leq k\leq n$,
$$k-1\leq sdiam_k(G)\leq rx_k(G)\leq n-1.$$
\end{obs}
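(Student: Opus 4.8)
The plan is to prove Observation~\ref{obs1} by establishing its three inequalities one at a time, each directly from the definitions, with no auxiliary machinery required. Throughout I take the \emph{size} of a tree to mean its number of edges, so that a tree on at least $k$ vertices has at least $k-1$ edges.

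For the leftmost inequality $k-1\le sdiam_k(G)$, I would observe that any $S$-tree for a $k$-set $S$ contains at least $k$ vertices, hence at least $k-1$ edges; thus $d(S)\ge k-1$ for every $k$-subset $S$, and maximizing over all such $S$ yields $sdiam_k(G)\ge k-1$. For the middle inequality $sdiam_k(G)\le rx_k(G)$, I would fix an optimal $k$-rainbow coloring $c$ of $G$ using $t=rx_k(G)$ colors. For an arbitrary $k$-subset $S$ there is, by definition of a $k$-rainbow coloring, a rainbow $S$-tree $T$; since the edges of $T$ receive pairwise distinct colors and only $t$ colors are available, $|E(T)|\le t$, and as $T$ is a tree containing $S$ we get $d(S)\le|E(T)|\le t$. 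Since $S$ was arbitrary, $sdiam_k(G)=\max_S d(S)\le rx_k(G)$.

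For the rightmost inequality $rx_k(G)\le n-1$, I would exhibit an explicit coloring. Because $G$ is connected it has a spanning tree $T$ with exactly $n-1$ edges; assign the edges of $T$ pairwise distinct colors and color every chord of $T$ with, say, color $1$. Then for any $k$-subset $S$, the minimal subtree of $T$ connecting $S$ uses only edges of $T$, all of distinct colors, so it is a rainbow $S$-tree. Hence this coloring is a $k$-rainbow coloring with $n-1$ colors, and therefore $rx_k(G)\le n-1$.

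I do not expect any genuine obstacle here: the statement is immediate once the definitions of Steiner distance, $k$-rainbow coloring, and $k$-rainbow index are unpacked, and it is recorded as an observation precisely for that reason. The only point that deserves explicit mention is the convention that the size of a tree counts edges, since this single fact anchors both the first inequality (an $S$-tree has $\ge k-1$ edges) and the last (a spanning tree has $n-1$ edges).
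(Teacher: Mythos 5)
Your proof is correct, and since the paper states this observation as immediate (citing Chartrand et al.) without giving its own argument, your three-step unpacking of the definitions — an $S$-tree has at least $k-1$ edges, a rainbow $S$-tree under an optimal coloring has at most $rx_k(G)$ edges, and a rainbow-colored spanning tree gives the upper bound $n-1$ — is exactly the intended justification. Nothing further is needed.
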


The authors of \cite{Char2} showed that the $k$-rainbow index of trees can achieve the upper bound.

\begin{pro}\cite{Char2}\label{pro1}
Let $T$ be a tree of order $n\geq3$. For each integer
$k$ with $3\leq k\leq n$,
$$rx_k(T)=n-1.$$
\end{pro}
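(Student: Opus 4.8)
The plan is to prove the lower bound $rx_k(T)\ge n-1$, since the matching upper bound $rx_k(T)\le n-1$ is already supplied by Observation~\ref{obs1} (as $T$ has order $n$). I would argue by contradiction. Suppose $c$ is a $k$-rainbow coloring of $T$ using at most $n-2$ colors. Since a tree of order $n$ has exactly $n-1$ edges, the pigeonhole principle gives two distinct edges $e_1,e_2\in E(T)$ with $c(e_1)=c(e_2)$. The goal is then to exhibit a $k$-set $S$ of vertices such that every $S$-tree in $T$ contains both $e_1$ and $e_2$; such an $S$ can have no rainbow $S$-tree, contradicting the choice of $c$.

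The structural fact I would rely on is that in the tree $T$, for any vertex set $S$ there is a unique minimal subtree $T_S$ containing $S$, namely the union of the (unique) $T$-paths between pairs of vertices of $S$, and that \emph{every} connected subgraph of $T$ containing $S$ — in particular every $S$-tree — must contain $T_S$. Hence it suffices to choose $S$ so that $e_1,e_2\in E(T_S)$. To arrange this, delete $e_1$ and $e_2$ from $T$; the result is a forest with exactly three components, which can be labelled $C_1,C_2,C_3$ so that $e_1$ joins $C_1$ to $C_2\cup C_3$ and $e_2$ joins $C_3$ to $C_1\cup C_2$ (in the degenerate case where $e_1$ and $e_2$ share an endpoint, $C_2$ is the piece containing that endpoint, and this labelling still applies). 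With this labelling, $e_1\in E(T_S)$ whenever $S$ meets both $C_1$ and $C_2\cup C_3$, and $e_2\in E(T_S)$ whenever $S$ meets both $C_3$ and $C_1\cup C_2$.

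Finally I would build $S$ explicitly: pick one vertex $s_1\in C_1$, one vertex $s_3\in C_3$, and then $k-2$ further vertices arbitrarily among the remaining $n-2$ vertices of $T$ (this is possible because $k\le n$, so $n-2\ge k-2$), obtaining a set $S$ with $|S|=k$. Since $S$ meets both $C_1$ and $C_3$, the $T$-path from $s_1$ to $s_3$ lies inside $T_S$ and must use $e_1$ (the only edge leaving $C_1$) and $e_2$ (the only edge entering $C_3$); thus $e_1,e_2\in E(T_S)$, so no $S$-tree is rainbow, giving the contradiction and hence $rx_k(T)=n-1$. I do not expect a real obstacle: the only points requiring care are checking that the three components after deleting $e_1,e_2$ can always be labelled in the stated linear fashion (including the adjacent-edges case) and the elementary count $n-2\ge k-2$.
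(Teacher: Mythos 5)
Your argument is correct: the upper bound follows from Observation~\ref{obs1}, and your pigeonhole plus three-component argument (every $S$-tree in a tree must contain the unique minimal Steiner tree $T_S$, and choosing $S$ to meet the two outer components forces both equally colored edges into $T_S$) soundly gives $rx_k(T)\geq n-1$, including the case of adjacent edges. The paper itself states this proposition without proof, citing Chartrand, Okamoto and Zhang, and your proof is essentially the standard argument for that result, so there is nothing to add.
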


From above, we notice that for
a fixed integer $k$ with $k\geq 3$, the difference $rx_k(G)-sdiam_k(G)$ can be arbitrarily large.
In fact,
if $G$ is a star $K_{1,n}$, then we have $rx_k(G)-sdiam_k(G)=
n-k$.

They also determined the precise values for the $k$-rainbow index of the cycle $C_n$
and the 3-rainbow index of the complete graph $K_n$.

\begin{thm}\cite{Char2}\label{thm1}
For integers $k$ and $n$ with $3\leq k\leq n$,
\begin{displaymath}
rx_k(C_n) = \left\{\begin{array}{ll}
n-2  & \text{if $k=3$ and $n\geq 4$}\\
n-1  & \text{if $k=n=3$ or $4\leq k\leq n$.}
\end{array}\right.
\end{displaymath}
\end{thm}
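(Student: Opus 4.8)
The plan is to separate the two values in the statement. The upper bound $rx_k(C_n)\le n-1$ is immediate from Observation~\ref{obs1}, since $C_n$ has order $n$, so the real content is: (a) that for $k=3$ and $n\ge 4$ the cycle even admits an $(n-2)$-coloring, and that $n-2$ colours are then necessary; and (b) that for $k\ge 4$ (and for $k=n=3$) one cannot get below $n-1$. I would therefore handle the construction (needed only for $k=3$, to beat the trivial $n-1$) and the two lower bounds separately.

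For the upper bound when $k=3$, label the edges of $C_n$ as $e_1,\dots,e_n$ in cyclic order and use the colouring that assigns $e_1,e_3$ one colour, $e_2,e_4$ a second colour, and $e_5,\dots,e_n$ the remaining $n-4$ distinct colours --- exactly $n-2$ colours, with the two repeated classes "interleaved" around the cycle. For any $3$-set $S$, the three vertices split $C_n$ into three arcs, and every Steiner $S$-tree is obtained by deleting a consecutive chunk of edges lying inside one of the arcs; such a tree is rainbow precisely when the deleted chunk meets both repeated colour classes. Because $e_1,e_2,e_3,e_4$ occur in interleaved cyclic position, a short case check on where the three vertices of $S$ fall relative to these four edges shows that some arc of $S$ contains an $\{e_1,e_3\}$-edge and an $\{e_2,e_4\}$-edge; deleting the sub-chunk of that arc spanning those two edges gives a rainbow Steiner $S$-tree. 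Hence $rx_3(C_n)\le n-2$.

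For the lower bounds the key device is "consecutive-vertex test sets". If $S$ is a set of $k$ consecutive vertices $v_1,\dots,v_k$, then every Steiner $S$-tree is either $C_n$ with one edge removed (a path on $n-1$ edges) or else contains the $k-1$ edges spanned by $S$; so a colouring with at most $n-2$ colours cannot make the first kind rainbow, and therefore some tree containing those $k-1$ consecutive edges must be rainbow, forcing them to be distinctly coloured. Rotating, \emph{every $k-1$ consecutive edges are distinctly coloured}; for $k\ge 4$ this already makes the colouring proper with extra spacing, and for $k=3$ one likewise gets that the colouring is proper and --- using test sets whose three arcs have sizes $2,2,n-4$ --- that every $4$ consecutive edges are distinctly coloured. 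I would then bring in test sets with one or two short arcs (such as $v_1,\dots,v_{k-1},v_{k+1}$): when colours are scarce only the cheapest Steiner trees can be rainbow, which constrains the positions of the repeated colours very tightly, and a counting argument produces a "spread-out" $k$-set all of whose few candidate Steiner trees repeat a colour, ruling out $n-2$ colours (for $k\ge 4$) and $n-3$ colours (for $k=3$). For $k=n$ the lower bound is trivial: a spanning tree of $C_n$ is a path $P_n$, so $sdiam_n(C_n)=n-1\le rx_n(C_n)$, which also covers $k=n=3$.

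The hard part is this last step of the lower bound: turning the "every window of $k-1$ (or $4$) consecutive edges is rainbow" conditions into a complete list of the colourings that use the fewest colours, and then exhibiting a bad $k$-set inside each. The cases $k=3$ and $k\ge 4$ must be handled separately --- they force rainbow windows of different lengths, which is exactly why $n-2$ colours suffice for $k=3$ but not for $k\ge 4$ --- and the bookkeeping (together with checking the small cycles $C_4,C_5,C_6$ by hand) is where most of the effort goes.
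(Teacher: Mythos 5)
This theorem is quoted in the paper from \cite{Char2} without proof, so there is no in-paper argument to compare against; judging your proposal on its own terms, the upper-bound half is in good shape but the lower-bound half has a real gap. Your colouring of $C_n$ with $n-2$ colours (the two repeated classes $\{e_1,e_3\}$ and $\{e_2,e_4\}$ interleaved) does work: a tree avoiding a repetition must delete a chunk meeting both classes, and since only three vertices are available to separate the four cyclic gaps determined by $e_1,e_2,e_3,e_4$, some arc of any $3$-set contains edges of both colours, so the ``short case check'' you allude to genuinely closes. Likewise your reduction ``with few colours the Hamilton-minus-one-edge trees are dead, so consecutive test sets force rainbow windows of $k-1$ (resp.\ $4$) consecutive edges'' is correct as far as it goes, and the $k=n$ case via $sdiam_n(C_n)=n-1$ is fine.

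The gap is exactly where you say ``a counting argument produces a spread-out $k$-set all of whose few candidate Steiner trees repeat a colour'' and then defer ``the hard part.'' That step is the entire content of the lower bounds $rx_3(C_n)\geq n-2$ and $rx_k(C_n)\geq n-1$ ($4\leq k\leq n-1$): the window conditions you extract are far from sufficient (for fixed $k$ and large $n$, ``every $k-1$ consecutive edges distinct'' is satisfiable with $k-1$ colours), and no classification of extremal colourings or construction of the bad $k$-set is actually given. Note also that the classification you fear is unnecessary: for $k\geq 4$, if some colour is used on two edges $e=uv$ and $f=xy$, put $u,v,x,y$ into $S$ (possible since $k\geq 4$); then any deleted chunk either avoids both $e$ and $f$ (so the tree keeps the repeated colour) or, because both endpoints of $e$ and of $f$ lie in $S$, equals exactly $\{e\}$ or $\{f\}$, leaving a path with $n-1$ edges, which cannot be rainbow with at most $n-2$ colours. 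For $k=3$ and at most $n-3$ colours, a short case analysis on how the at least three ``excess'' repetitions are distributed (one class of size $\geq 4$, a class of size $3$ plus another repetition, or three classes of size $2$) produces a blocking $3$-set in the same spirit, together with the hand checks for $n\in\{4,5,6\}$ you already anticipate. As written, though, your proposal establishes only the upper bound for $k=3$ and the trivial cases, not the equalities claimed in the theorem.
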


\begin{thm}\cite{Char2}\label{thm2}
\begin{displaymath}
rx_3(K_n) = \left\{\begin{array}{ll}
2  & \text{if $3\leq n \leq 5$}\\
3  & \text{if $n\geq 6$.}
\end{array}\right.
\end{displaymath}

\end{thm}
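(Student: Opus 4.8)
The plan is to recast ``$K_n$ has a $3$-rainbow colouring with $t$ colours'' as a statement about monochromatic triangles, and then read the lower bounds off from Ramsey numbers and the upper bounds off from explicit colourings. First I would classify rainbow $S$-trees when few colours are used: for $S=\{x,y,z\}$, a rainbow $S$-tree using at most $3$ colours has $2$ or $3$ edges, so it is either one of the three paths through $x,y,z$, or a spanning tree of $\{x,y,z,w\}$ for exactly one extra vertex $w$ using at most one edge inside $\{x,y,z\}$. Three observations result. \emph{(a)} A triangle that is not monochromatic has two adjacent edges of different colours, hence a rainbow $S$-tree. \emph{(b)} With only $2$ colours, a monochromatic triangle has no rainbow $S$-tree. \emph{(c)} With $3$ colours, a monochromatic triangle $\{x,y,z\}$ of colour $c$ has a rainbow $S$-tree if and only if some vertex $w\notin\{x,y,z\}$ joins to $x,y,z$ with edges realizing both colours $\ne c$ --- the ``if'' via the path $p\!-\!w\!-\!r\!-\!s$, $\{p,r,s\}=\{x,y,z\}$, with $c(wp),c(wr)$ the two colours $\ne c$, and the ``only if'' because every $3$-edge $S$-tree must contain two such edges at such a $w$. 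Hence: a $2$-colouring of $K_n$ is a $3$-rainbow colouring iff it has no monochromatic triangle; a $3$-colouring is one iff every monochromatic triangle is ``saved'' in the sense of \emph{(c)}; and any $3$-colouring with no monochromatic triangle is automatically a $3$-rainbow colouring.

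For the lower bounds: every $3$-subset of $K_n$ has Steiner distance $2$, so $sdiam_3(K_n)=2$ and Observation~\ref{obs1} gives $rx_3(K_n)\ge 2$. Since the Ramsey number $R(3,3)=6$, for $n\ge 6$ every $2$-colouring of $K_n$ has a monochromatic triangle, so by \emph{(b)} no $2$-colouring of $K_n$ is a $3$-rainbow colouring and $rx_3(K_n)\ge 3$. For the matching upper bound when $3\le n\le 5$, the $2$-colouring of $K_5$ whose two colour classes are both $5$-cycles has no monochromatic triangle, hence is a $3$-rainbow colouring, giving $rx_3(K_n)\le 2$ and equality.

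For the upper bound $rx_3(K_n)\le 3$ with $n\ge 6$: when $6\le n\le 16$ we have $n<R(3,3,3)=17$, so $K_n$ has a $3$-colouring with no monochromatic triangle, and we are done by the last sentence of the first paragraph. For $n\ge 17$ monochromatic triangles are forced, and this is the crux. One must produce a $3$-colouring of $K_n$ in which \emph{every} monochromatic triangle is saved as in \emph{(c)}; the obstacle is that a vertex whose three edges to $\{x,y,z\}$ all have the same colour is useless for this, so the colouring has to be ``spread out'' enough that every triple is seen by some vertex in at least two different colours. I would do this probabilistically: for a uniformly random $3$-colouring of $K_n$, the probability that a fixed triple has no rainbow $S$-tree is $\tfrac19\bigl(\tfrac59\bigr)^{n-3}$ (it must be monochromatic, and then each of the $n-3$ other vertices independently fails the test of \emph{(c)} with probability $\tfrac59$), so the expected number of ``bad'' triples is at most $\tfrac19\binom n3\bigl(\tfrac59\bigr)^{n-3}$, which is below $1$ for all but finitely many $n$; the finitely many intermediate values are then checked directly. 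Alternatively one can give an explicit balanced (e.g.\ circulant-type) $3$-colouring with the same separation property. The one genuinely nontrivial point is verifying that the chosen colouring (or the finite ad hoc range) leaves no monochromatic triangle unsaved; the rest is bookkeeping with \emph{(a)}--\emph{(c)} and the Ramsey values $R(3,3)=6$ and $R(3,3,3)=17$.
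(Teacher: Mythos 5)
The paper itself gives no proof of this theorem --- it is quoted from \cite{Char2} --- so there is nothing internal to compare against; judged on its own merits, your argument is essentially correct and is a genuinely different route from the explicit constructions in the cited source. Your reduction is sound: with at most three colours a rainbow $S$-tree has at most three edges, hence at most one Steiner vertex, so observations (a)--(c) do characterize when a colouring is $3$-rainbow (the only slip is the phrase ``using at most one edge inside $\{x,y,z\}$'': a $4$-vertex tree may use two triangle edges, but such a tree cannot be rainbow when the triangle is monochromatic, so nothing is lost). The lower bounds via $sdiam_3(K_n)=2$ and $R(3,3)=6$, and the value $2$ for $3\le n\le 5$ via the two-pentagon $2$-colouring of $K_5$ restricted to $K_3,K_4$, are all fine. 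For the upper bound with $n\ge 6$ your plan does close, and in fact more cleanly than you hedge: $6\le n\le 16$ is covered by a triangle-free $3$-colouring coming from $R(3,3,3)=17$, and for the first-moment step the expected number of unsaved triples is exactly $\binom{n}{3}\tfrac19\bigl(\tfrac59\bigr)^{n-3}$, which is roughly $0.02$ at $n=17$, while the ratio of consecutive values is $\tfrac{n+1}{n-2}\cdot\tfrac59<1$ for all $n\ge 6$; hence it is below $1$ for every $n\ge 17$ and there are no ``finitely many intermediate values'' left to check --- you should carry out this two-line computation explicitly rather than defer it, since it is the only place the argument could have failed to meet the Ramsey threshold. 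Compared with the approach of \cite{Char2}, which proves the upper bound by exhibiting concrete $3$-rainbow colourings, your proof is shorter but non-constructive for large $n$ and imports the Ramsey values $R(3,3)=6$ and $R(3,3,3)=17$; the constructive route gives explicit colourings and avoids both the probabilistic step and the use of $R(3,3,3)$.
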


Let $\mathcal{F}$ be a family of connected graphs. We say that a graph
$G$ is \emph{$\mathcal{F}$-free} if $G$ does not contain any induced subgraph
isomorphic to a graph from $\mathcal{F}$. Specifically, for $\mathcal{F}=\{X\}$
we say that $G$ is \emph{X-free}, for $\mathcal{F}=\{X,Y\}$
we say that $G$ is \emph{(X,Y)-free}, and for $\mathcal{F}=\{X,Y,Z\}$
we say that $G$ is \emph{(X,Y,Z)-free}. The members of $\mathcal{F}$ will be referred
as \emph{forbidden induced subgraphs} in this context.
If $\mathcal{F}=\{X_1,X_2,\dots,X_k\}$, we also refer to the graphs $X_1,X_2,\dots,X_k$ as
a \emph{forbidden $k$-tuple},
and for $|\mathcal{F}|=2$ and $3$ we also say \emph{forbidden pair} and \emph{forbidden triple}, respectively.

In~\cite{P.1}, Holub et al. considered the question: For which families $\mathcal{F}$
of connected graphs, a connected $\mathcal{F}$-free graph $G$ satisfies $rc(G)\leq diam(G)+C_\mathcal{F}$,
where $C_\mathcal{F}$ is a constant (depending on $\mathcal{F}$),
and they gave a complete answer for $|\mathcal{F}|\in\{1,2\}$ in the following two
results (where $N$ denotes the \emph{net}, a graph obtained by attaching a pendant edge
to each vertex of a triangle).

\begin{thm}\cite{P.1}\label{thm3}
Let $X$ be a connected graph. Then
there is a constant $C_X$ such that every connected $X$-free graph $G$
satisfies $rc(G)\leq diam(G)+C_X$,
if and only if $X=P_3$.
\end{thm}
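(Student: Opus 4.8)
The plan is to establish the two directions of the equivalence separately. The ``if'' direction is immediate: if $X=P_3$, then a connected $P_3$-free graph $G$ contains no induced path on three vertices, so adjacency is transitive and $G$ is a clique; hence $rc(G)=diam(G)=1$ and the inequality holds with $C_{P_3}=0$. All the content is in the ``only if'' direction, which I would prove by contraposition: assuming $X$ is a connected graph different from $P_3$ (and, to exclude trivialities, on at least three vertices --- if $|V(X)|\le 2$ the family of connected $X$-free graphs is empty or is just $\{K_1\}$), I would exhibit an infinite family of connected $X$-free graphs along which $rc-diam\to\infty$, so that no constant $C_X$ can work.

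I would split this into two cases depending on whether $X$ is a star. If $X$ is not a star $K_{1,t}$, then since every connected induced subgraph of a star is again a star, $X$ is induced in no $K_{1,n}$; thus every $K_{1,n}$ is $X$-free, and because the only tree joining two leaves of $K_{1,n}$ runs through the centre in two edges, every rainbow coloring of $K_{1,n}$ uses $n$ distinct colors. Hence $rc(K_{1,n})-diam(K_{1,n})=n-2\to\infty$. If instead $X=K_{1,m}$ is a star, then $m\ge 3$ (the value $m=2$ being the excluded graph $P_3$), and stars are now useless since they contain induced claws. Here I would reduce to the claw: because $K_{1,3}$ is an induced subgraph of $K_{1,m}$, every claw-free graph is $K_{1,m}$-free, so it is enough to produce connected claw-free graphs with $rc-diam$ unbounded.

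For this family I would use the ``subdivided net'' $G_L$, obtained from a triangle $abc$ by attaching a pendant path of length $L$ at each of $a,b,c$; thus $G_1$ is the net $N$. It is claw-free, since its only vertices of degree $\ge 3$ are $a,b,c$ and the neighbourhood of each contains the triangle edge joining the other two, hence has independence number $2$; and $diam(G_L)=2L+1$. The crucial estimate is $rc(G_L)\ge 3L$: all $3L$ path-edges are bridges, so for any two leg-tips $x_L$ and $y_L$ every $x_L$--$y_L$ path must use all $L$ edges of leg $x$ and all $L$ edges of leg $y$; therefore in a rainbow coloring these $2L$ edges are pairwise differently colored, and ranging over the three pairs of legs forces all $3L$ path-edges to get distinct colors. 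Consequently $rc(G_L)-diam(G_L)\ge 3L-(2L+1)=L-1\to\infty$, completing the case and the theorem. The step I expect to be the real obstacle is exactly locating this family: claw-freeness rules out the high-degree, locally sparse gadgets (stars, subdivided stars, generalized theta graphs) that most cheaply separate $rc$ from $diam$, and the fix is to replace the single hub of a three-legged spider by a triangle, which destroys all induced claws while keeping the feature that drives the gap --- many independent bridges co-existing with small diameter; checking the bridge-based lower bound on $rc$ is then the one piece that demands care.
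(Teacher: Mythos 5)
Your proposal is correct, and its overall strategy is the one the paper relies on: sufficiency via ``$P_3$-free $\Rightarrow$ complete'', and necessity via two families of $X$-free graphs with unbounded gap, first stars $K_{1,n}$ to force $X$ to be a star, then a claw-free family to force $X=P_3$. (Note that the paper itself only cites this theorem from \cite{P.1}; its own analogous argument is the proof of Theorem~\ref{thm5} and Proposition~\ref{pro2}.) The one genuine difference is the claw-free gadget: the paper (and the cited source) uses $G_2^t=K_t^h$, the complete graph $K_t$ with a pendant edge at each vertex, where the $t$ pendant edges must receive pairwise distinct colors while $diam=3$, so the gap is $t-3$; you instead use the subdivided net $G_L$, where the bridge argument you give (all $3L$ leg edges pairwise distinctly colored, $diam=2L+1$) is valid and yields a gap of $L-1$. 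Both choices settle the $rc$ statement; the $K_t^h$ gadget has the advantages of a constant (rather than growing) diameter and of reusability in this paper's $rx_3$ setting, where your subdivided net would fail because $sdiam_3(G_L)=3L+2$ grows with $L$ and the gap no longer diverges. Your side remark excluding $|V(X)|\le 2$ is the right reading of the statement, since for $X=K_1$ or $K_2$ the class of connected $X$-free graphs is degenerate and the claimed equivalence is only meant for nontrivial $X$.
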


\begin{thm}\cite{P.1}\label{thm4}
Let $X, Y$ be connected graphs such that $X,Y \neq P_3$. Then
there is a constant $C_{XY}$ such that every connected $(X,Y)$-free graph $G$
satisfies $rc(G)\leq diam(G)+C_{XY}$,
if and only if (up to symmetry) either $X=K_{1,r} \ (r\geq4)$ and $Y=P_4$,
or $X=K_{1,3}$ and $Y$ is an induced subgraph of $N$.
\end{thm}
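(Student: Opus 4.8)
The plan is to prove both directions of the characterization. Call a connected graph a \emph{clique-with-pendants} if it is obtained from some complete graph $K_s$ by attaching at most one pendant edge at each of its vertices; note that $P_4$ is exactly $K_2$ with a pendant at each of its two vertices, and that the connected induced subgraphs of the net $N$ are exactly the cliques-with-pendants whose underlying complete graph has order at most $3$ (namely $K_2$, $P_3$, $P_4$, $K_3$, the paw, the bull, and $N$). We may assume throughout that $X$ and $Y$ each have at least three vertices, the remaining cases being trivial.

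\emph{The ``only if'' direction.} Assuming $\{X,Y\}$ is not on the list, we produce an infinite family of connected $(X,Y)$-free graphs $G$ with $rc(G)-diam(G)\to\infty$. (1) The stars $K_{1,n}$ satisfy $rc(K_{1,n})-diam(K_{1,n})=n-2$, and every connected induced subgraph of a star is a star; so unless one of $X,Y$ — say $X$, by symmetry — is a star $K_{1,r}$ with $r\ge3$, this family is already $(X,Y)$-free and we are done. (2) The coronas $G_k$ ($K_k$ with a pendant edge attached at every vertex) are claw-free, hence $K_{1,r}$-free, have $diam(G_k)=3$, and satisfy $rc(G_k)\ge k$, since the $k$ pendant edges are bridges and any two of them lie on every path between their pendant ends, forcing pairwise distinct colors; as the connected induced subgraphs of the $G_k$ are exactly the cliques-with-pendants, this family is $(K_{1,r},Y)$-free unless $Y$ is a clique-with-pendants. (3) If $r\ge4$, the three-legged spiders $S_\ell$ (three internally disjoint paths of length $\ell$ sharing an endpoint) have maximum degree $3$, hence are $K_{1,4}$-free; all $3\ell$ of their edges are bridges, any two lying on a common path between two leaves, so $rc(S_\ell)=3\ell$ while $diam(S_\ell)=2\ell$; since every induced subgraph of $S_\ell$ is triangle-free and a triangle-free clique-with-pendants has underlying complete graph of order $\le2$, combining with (2) forces $Y=P_4$. (4) If $r=3$, let $T_d$ arise from a complete binary tree of depth $d$ by expanding each node into a triangle and each tree-edge into a bridge joining a vertex of one triangle to a vertex of the next; each triangle then carries at most one bridge per vertex, so every neighborhood is a clique together with at most one more vertex — whence $T_d$ is claw-free — and $T_d$ is $K_4$-free; its $\Theta(2^d)$ bridges must all receive distinct colors, so $rc(T_d)=\Omega(2^d)$ while $diam(T_d)=O(d)$, and since every induced subgraph of $T_d$ is $K_4$-free, combining with (2) forces the underlying complete graph of $Y$ to have order $\le3$, i.e. $Y$ to be an induced subgraph of $N$. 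In every case $\{X,Y\}$ turns out to be on the list, a contradiction.

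\emph{The ``if'' direction.} First let $X=K_{1,r}$ ($r\ge4$) and $Y=P_4$. Unless it is a single vertex, a connected $P_4$-free graph $G$ is a join $G_1\vee G_2$ of nonempty graphs, so $diam(G)\le2$; every independent set of $G$ lies inside one of $G_1,G_2$, hence inside the neighborhood of a vertex of the other part, so $K_{1,r}$-freeness yields $\alpha(G)\le r-1$; since $P_4$-free graphs are perfect, $G$ is covered by at most $r-1$ cliques, and combining this with $diam(G)\le2$ (color the edges inside the covering cliques, and the edges from each of two vertices $v_1\in G_1$, $v_2\in G_2$ to the opposite part by proper colorings of the relevant complements) one bounds $rc(G)$ by a function of $r$, so $rc(G)\le diam(G)+C_{XY}$. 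Now let $X=K_{1,3}$ and $Y$ an induced subgraph of $N$. Then any graph with no induced $Y$ has no induced $N$, so it suffices to bound $rc(G)$ for connected $(K_{1,3},N)$-free $G$; for this one appeals to the structure of net-free claw-free graphs to obtain a dominating ``chain of cliques'' $Q_1,\dots,Q_m$ with $m\le diam(G)+O(1)$, colors it with $m$ colors, and spends a bounded number of further colors on the vertices off the chain and on the attached pendant parts, giving $rc(G)\le diam(G)+C_{XY}$.

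\emph{Main obstacle.} I expect the ``if'' direction for $(K_{1,3},N)$-free graphs to be the crux: it rests on the structural description of net-free claw-free graphs and on a careful rainbow coloring of the resulting clique-chain, with the bounded correction for off-chain and pendant vertices the delicate step. A secondary difficulty is making the ``only if'' analysis airtight — proving the $rc$ lower bounds for the corona and tree-of-triangles families via the bridge argument, and checking that these few families really eliminate \emph{every} pair outside the stated list.
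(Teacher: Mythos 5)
First, note that Theorem~4 is quoted in this paper from [P.1] (Holub, Ryj\'a\v{c}ek, Schiermeyer, Vr\'ana) and is not proved here, so your attempt can only be judged on its own terms and against the cited source's general strategy. Your ``only if'' direction is essentially sound: large stars force one of $X,Y$ to be $K_{1,r}$ ($r\geq 3$); the coronas $K_t^h$ (the paper's $G_2^t$) are claw-free with $diam=3$ and at least $t$ bridges, forcing the other graph to be a clique-with-pendants; and your two separating families --- the three-legged spiders for $r\geq 4$ and the binary tree of triangles for $r=3$ --- do the job, since in any connected graph any two bridges must receive distinct colors (for suitable endpoints every path between them uses both bridges), which validates your lower bounds $rc(S_\ell)=3\ell$ and $rc(T_d)=\Omega(2^d)$ against diameters $2\ell$ and $O(d)$. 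The $(K_{1,r},P_4)$ half of the sufficiency can also be completed along your lines: the join decomposition of connected $P_4$-free graphs gives $diam(G)\leq 2$ and $\alpha(G)\leq r-1$, and coloring the stars from one vertex of each join-part to the opposite part by proper colorings of the complements (at most $r-1$ colors each) already yields $rc(G)\leq 2(r-1)$, so the appeal to clique covers is not even needed.

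The genuine gap is the sufficiency for $X=K_{1,3}$ and $Y$ an induced subgraph of $N$, which is the substantial half of the theorem. You correctly reduce to $(K_{1,3},N)$-free graphs, but then you only ``appeal to the structure of net-free claw-free graphs'' to obtain a dominating chain of cliques $Q_1,\dots,Q_m$ with $m\leq diam(G)+O(1)$ and assert that a bounded number of additional colors handles the off-chain and pendant vertices. No such structural statement is formulated precisely, proved, or accurately cited, and the classical facts about connected $(K_{1,3},N)$-free graphs (e.g., that every connected such graph is traceable) do not by themselves give $rc(G)\leq diam(G)+C$: a Hamiltonian path only yields $rc(G)\leq n-1$. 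Moreover, even granting a dominating clique-chain of the right length, the ``bounded correction'' is exactly where the difficulty sits --- one must rainbow-connect pairs of vertices hanging off the chain at far-apart positions while reusing the chain colors and spending only constantly many new ones, and nothing in your outline explains how claw- and net-freeness make this possible. In [P.1] this case occupies the bulk of the proof; as written, your proposal leaves it as an unproven black box, so the argument is incomplete precisely at its crux (as you yourself anticipate in your ``main obstacle'' remark).
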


Let $k\geq 3$ be a positive integer.
From Observation \ref{obs1}, we know that the $k$-rainbow index is lower
bounded by the $k$-Steiner diameter.
So we wonder an analogous question concerning the $k$-rainbow index
of graphs. In this paper, we will consider the following question.

\emph{For which families $\mathcal{F}$
of connected graphs, there is a constant $C_\mathcal{F}$ such that
$rx_k(G)\leq sdiam_k(G)+C_\mathcal{F}$ if a connected graph $G$ is
$\mathcal{F}$-free ?}

In general, it is very difficult to give answers to the above question,
even if one considers the case $k=4$. So, in this paper we pay our attention only
on the case $k=3$. In Sections 3, 4 and 5, we give complete answers for the
3-rainbow index when $|\mathcal{F}|=1, 2$ and $3$, respectively.
Finally, we give a complete characterization for an arbitrary finite family $\mathcal{F}$.

\section{Preliminaries}

In this section, we introduce some further terminology and notation that will be
used in the sequel. Throughout the paper, $\mathbb{N}$ denotes the set of all
positive integers.

Let $G$ be a graph. We use $V(G)$, $E(G)$, and $|G|$ to denote the vertex set,
edge set, and the order of $G$, respectively.
For $A\subseteq V(G)$,
$|A|$ denotes the number of vertices in $A$,
and $G[A]$ denotes the subgraph of $G$
induced by the vertex set $A$.
For two disjoint subsets $X$ and $Y$ of $V(G)$,
we use $E[X,Y]$ to denote the set of edges of $G$ between $X$ and $Y$.
For graphs $X$ and $G$, we write $X\subseteq G$ if $X$ is a subgraph of
$G$, $X\overset{\scriptscriptstyle{\text{IND}}}{\subseteq}G$
if $X$ is an induced subgraph of
$G$, and $X\cong G$ if $X$ is isomorphic to $G$. In an edge-colored graph $G$, we use $c(uv)$ to denote the color assigned to an edge $uv\in E(G)$.

Let $G$ be a connect graph. For $u,v\in V(G)$, a path in $G$ from $u$ to $v$ will be referred as a \emph{$(u,v)$-path}, and, whenever necessary, it will be considered with orientation from $u$ to $v$. The \emph{distance} between $u$ and $v$ in $G$, denoted by $d_G(u,v)$, is the length of a shortest $(u,v)$-path in $G$. The \emph{eccentricity} of a vertex $v$ is $ecc(v):=max_{x\in V(G)}d_G(v,x)$. The \emph{diameter} of $G$ is $diam(G):=max_{x\in V(G)}ecc(x)$, and the \emph{radius} of $G$ is $rad(G):=min_{x\in V(G)ecc(x)}$. One can easily check that $rad(G)\leq diam(G)\leq 2rad(G)$. A vertex $x$ is \emph{central} in $G$ if $ecc(x)=rad(G)$. Let $D\subseteq V(G)$ and $x\in V(G)\setminus D$. Then we call a path $P=v_0v_1\dots v_k$ is a $v$-$D$ path if $v_0=v$ and $V(P)\cap D={v_k}$,
and $d_G(v,D):=min_{w\in D}d_G(v,w)$.

For a set $S\subseteq V(G)$ and $k\in \mathbb{N}$, we use $N_G^k(S)$ to denote the \emph{neighborhood at distance k} of $S$, i.e., the set of all vertices of $G$ at distance $k$ from $S$. In the special case when $k=1$, we simply write $N_G(S)$ for $N_G^1(S)$ and if $|S|=1$ with $x\in S$, we write $N_G(x)$ for $N_G(\{x\})$.
For a set $M\subseteq V(G)$, we set $N_M(S)=N_G(S)\cap M$ and $N_M(x)=N_G(x)\cap M$.
Finally, we will also use the \emph{closed neighborhood} of a vertex $x\in V(G)$ defined by $N_G^k[x]=(\cup_{i=1}^kN_G^i(x))\cup \{x\}$.

A set $D\subseteq V(G)$ is called \emph{dominating} if every vertex in
$V(G)\setminus D$ has a neighbor in $D$.
In addition, if $G[D]$ is connected, then we call $D$
a \emph{connected dominating set}.
A \emph{clique} of a graph $G$ is a subset $Q\subseteq V(G)$ such that $G[Q]$ is complete.
A clique is \emph{maximum} if $G$ has no clique $Q'$ with $|Q'|>|Q|$.
For a graph $G$, a subset $I\subseteq V(G)$ is called an \emph{independent set} of $G$
if no two vertices of $I$ are adjacent in $G$.
An independent set is \emph{maximum} if $G$ has no independent set $I'$ with $|I'|>|I|$.

For two positive integers $a$ and $b$, the \emph{Ramsey number} $R(a,b)$ is the smallest integer $n$ such that in any two-coloring of the edges of a complete graph on $n$ vertices $K_n$ by red and blue, either there is a red $K_a$ (i.e., a complete subgraph on $a$ vertices all of whose edges are colored red) or there is a blue $K_b$. Ramsey~\cite{R} showed that $R(a,b)$ is finite for any $a$ and $b$.

Finally, we will use $P_n$ to denote the path on $n$ vertices. An edge is called a \emph{pendant edge}
if one of its end vertices has degree one.

\section{Families with one forbidden subgraph}

In this section, we characterize all possible connected graphs $X$ such that every connected
$X$-free graph $G$ satisfies $rx_3(G)\leq sdiam_3(G)+C_X$, where $C_X$ is a constant.

\begin{thm}\label{thm5}
Let $X$ be a connected graph. Then there is a constant $C_X$
such that every connected $X$-free graph $G$
satisfies $rx_3(G)\leq sdiam_3(G)+C_X$,
if and only if $X = P_3$.
\end{thm}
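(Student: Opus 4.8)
The plan is to establish the two directions separately, mirroring the structure of Theorem~\ref{thm3} for the rainbow connection number.

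For the ``if'' direction, I would show that if $G$ is a connected $P_3$-free graph, then $rx_3(G) \leq sdiam_3(G) + C$ for some absolute constant $C$. The key observation is that $P_3$-free graphs are extremely restricted: a connected graph contains no induced $P_3$ if and only if it is a complete graph. Indeed, if $u,v$ are non-adjacent vertices, a shortest $u$-$v$ path has length at least $2$ and its first three vertices induce a $P_3$. So every connected $P_3$-free graph is $K_n$ for some $n$. Now I invoke Theorem~\ref{thm2}: $rx_3(K_n) \leq 3$ for all $n$, while $sdiam_3(K_n) = 2$ (any three vertices lie on a path of length $2$, and $sdiam_3 \geq k-1 = 2$ by Observation~\ref{obs1}). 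Hence $rx_3(G) \leq sdiam_3(G) + 1$, so $C_{P_3} = 1$ works. This direction is short and presents no real obstacle.

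For the ``only if'' direction, I need to show that for every connected graph $X \neq P_3$, no such constant $C_X$ exists; equivalently, I must exhibit, for each such $X$, an infinite family of connected $X$-free graphs $G$ with $rx_3(G) - sdiam_3(G)$ unbounded. The natural candidates are stars $K_{1,n}$: we have $rx_3(K_{1,n}) = n-1$ by Proposition~\ref{pro1} (a star is a tree), while $sdiam_3(K_{1,n}) = 2$, so $rx_3 - sdiam_3 = n-3 \to \infty$. It remains to argue that for $X \neq P_3$, the stars $K_{1,n}$ are eventually $X$-free. The point is a case split on whether $X$ is itself a star. If $X$ is not a star, then $X$ contains an induced $P_4$ or an induced triangle (a connected graph with no induced $P_4$ and no triangle would have diameter... actually more carefully: a connected non-star graph on at least $4$ vertices either has two independent edges giving structure incompatible with $K_{1,n}$, which contains no induced $P_4$ and no triangle and indeed no induced subgraph on $\geq 4$ vertices other than stars). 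Concretely: every induced subgraph of $K_{1,n}$ is a disjoint union of a star and isolated vertices, and in particular every \emph{connected} induced subgraph of $K_{1,n}$ is itself a star $K_{1,m}$ with $m \leq n$. So $K_{1,n}$ is $X$-free as soon as $X$ is not isomorphic to $K_{1,m}$ for any $m \leq n$ --- and if $X = K_{1,m}$ with $m \geq 3$ (note $m \neq 2$ since $K_{1,2} = P_3$ is excluded), then taking $n < m$, the star $K_{1,n}$ is $K_{1,m}$-free while $rx_3 - sdiam_3$ is still unbounded as we range over $3 \leq n < m$... but wait, that range is finite.

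So the star family alone does not finish the case $X = K_{1,m}$; I need a second family of connected graphs that are $K_{1,m}$-free for every fixed $m$ yet have $rx_3 - sdiam_3$ unbounded. The natural choice is a long path $P_n$ (or more robustly, a graph of bounded maximum degree and growing $rx_3 - sdiam_3$): by Proposition~\ref{pro1}, $rx_3(P_n) = n-1$, while $sdiam_3(P_n) = n-1$ as well --- which gives difference zero, so paths fail too. Instead I would use a ``subdivided star'' or ``spider'': take the tree $S$ obtained from $K_{1,3}$ by subdividing nothing but adding long legs, i.e.\ three paths of length $\ell$ sharing a common endpoint. Then $S$ has maximum degree $3$, hence is $K_{1,m}$-free for all $m \geq 4$; by Proposition~\ref{pro1} $rx_3(S) = |S| - 1 = 3\ell$, while $sdiam_3(S) = 2\ell$ (realized by the three leaves). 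Thus $rx_3(S) - sdiam_3(S) = \ell \to \infty$, and $S$ is $K_{1,m}$-free whenever $m \geq 4$. Combining: if $X$ is not a star, use $\{K_{1,n}\}_{n \geq 3}$ (each is $X$-free for $n$ large, in fact for all $n$ with $|K_{1,n}| < |X|$ or more simply whenever $X \not\cong K_{1,m}$); if $X = K_{1,m}$ with $m \geq 4$, use the spiders $S_\ell$; if $X = K_{1,3}$, again use the spiders $S_\ell$ with legs --- wait, $S_\ell$ contains $K_{1,3}$ as an induced subgraph at its center. So for $X = K_{1,3}$ I instead use long paths extended to have $rx_3 - sdiam_3$ large: here the right family is a cycle-free... hmm. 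Actually for $X = K_{1,3}$ (claw), claw-free connected graphs, I would use $C_n$: by Theorem~\ref{thm1}, $rx_3(C_n) = n-2$, and $sdiam_3(C_n) = \lfloor 2n/3 \rfloor$ roughly --- actually $sdiam_3(C_n)$: the worst three vertices are nearly equally spaced, giving Steiner distance $\approx n - \lceil n/3 \rceil$; so $rx_3 - sdiam_3 \approx n/3 \to \infty$, and cycles are claw-free. The main obstacle is exactly this bookkeeping: assembling, for each $X \neq P_3$, the correct family among $\{$stars, spiders, cycles$\}$ so that the chosen graphs are genuinely $X$-free (an induced-subgraph condition, which for these highly structured families reduces to checking small cases) while the gap $rx_3 - sdiam_3$ provably diverges --- the divergence itself follows immediately from Proposition~\ref{pro1}, Theorem~\ref{thm1} and the easy computation of $sdiam_3$ for these graphs.
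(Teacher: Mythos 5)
Your ``if'' direction is fine and is exactly the paper's argument: a connected $P_3$-free graph is complete, and Theorem~\ref{thm2} gives $rx_3(G)\leq 3=sdiam_3(G)+1$. The overall strategy for the ``only if'' direction (stars force $X$ to be a star, then a claw-free family with unbounded gap kills the stars $K_{1,m}$, $m\geq 3$) is also the paper's strategy. But your execution has a genuine error in the case $X=K_{1,m}$, $m\geq 4$: the spider family does not work. For the spider $S_\ell$ with three legs of length $\ell$, the Steiner tree of the three leaves is the \emph{entire} spider, so $sdiam_3(S_\ell)=3\ell=|S_\ell|-1$, not $2\ell$ (the value $2\ell$ is only the pairwise distance between two leaves). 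Since $rx_3(S_\ell)=|S_\ell|-1=3\ell$ by Proposition~\ref{pro1}, the gap $rx_3-sdiam_3$ is $0$ for every $\ell$, so spiders prove nothing; this is the same phenomenon that makes long paths useless, which you yourself noticed. There are also two harmless slips worth fixing: $sdiam_3(K_{1,n})=3$ (three leaves need the center and three edges), not $2$, and $rx_3(K_{1,n})=n$, not $n-1$.

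The repair is easy and you already have the needed ingredient: cycles. Since an induced $K_{1,m}$ with $m\geq 3$ contains an induced $K_{1,3}$ (the center plus any three leaves), every $K_{1,3}$-free graph is $K_{1,m}$-free for all $m\geq 3$; hence the cycles $C_n$, with $rx_3(C_n)=n-2$ (Theorem~\ref{thm1}) and $sdiam_3(C_n)=n-\lceil n/3\rceil$, handle \emph{all} stars $K_{1,m}$, $m\geq 3$, at once, and the spiders can simply be deleted. The paper instead uses for this step the graph $G_2^t=K_t^h$ (a pendant edge attached to each vertex of $K_t$), which is claw-free with $sdiam_3(G_2^t)=5$ and $rx_3(G_2^t)\geq t+2$; either family closes the gap, but as written your proposal leaves the case $X=K_{1,m}$, $m\geq4$, unproved.
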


\begin{figure}[h,t,b,p]
\begin{center}
\scalebox{1.2}[1.2]{\includegraphics{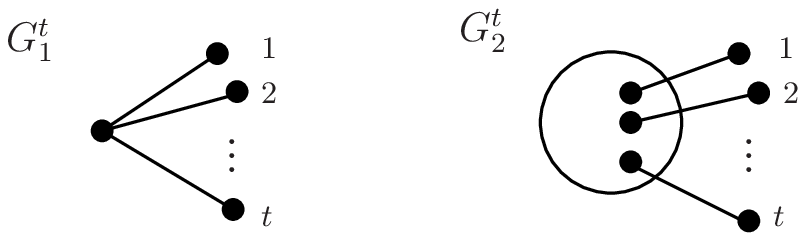}}\\[12pt]
Figure~1: The graphs $G_1^t$ and $G_2^t$.
\end{center}
\end{figure}

\begin{proof}
We have that the graph $G$ is a complete graph since $G$ is $P_3$-free.
Then from Theorem \ref{thm2},
it follows that $rx_3(G)\leq3=sdiam_3(G)+1$.

Let $t$ be an arbitrarily large integer,
set $G_1^t = K_{1,t}$,
and let $G_2^t$ denote the graph obtained by attaching a pendant edge
to each vertex of the complete graph $K_t$ (see Figure 1).
We also use $K_t^h$ to denote $G_2^t$.
Since $rx_3(G_1^t) = t$ but $sdiam_3(G_1^t) = 3$,
$X$ is an induced subgraph of $G_1^t$.
Clearly, $rx_3(G_2^t) \geq t+2$ but $sdiam_3(G_2^t) = 5$,
and $G_2^t$ is $K_{1,3}$-free. Hence, $X=K_{1,2}=P_3$. The proof is thus complete.
\end{proof}

\section{Forbidden pairs}

The following statement, which is the main result of this section,
characterizes all possible forbidden pairs
$X,Y$ for which there is a constant $C_{XY}$
such that $rx_3(G)\leq sdiam_3(G)+C_{XY}$ if $G$ is $(X,Y)$-free.
Since any $P_3$-free graph is a complete graph,
we exclude the case that one of $X,Y$
is $P_3$.

\begin{thm}\label{thm6}
Let $X,Y\neq P_3$ be a pair of connected graphs.
Then there is a constant $C_{XY}$
such that every connected $(X,Y)$-free graph $G$
satisfies $rx_3(G)\leq sdiam_3(G)+C_{XY}$,
if and only if (up to symmetry)
$X = K_{1,r},r\geq 3$ and $Y=P_4$.
\end{thm}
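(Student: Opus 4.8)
The plan is to mirror the structure of Theorem~\ref{thm4} (the $rc$-analogue for forbidden pairs), but with the constants and the base estimates recomputed for the $3$-rainbow index. The argument splits into a \emph{necessity} part (ruling out all pairs other than $\{K_{1,r},P_4\}$) and a \emph{sufficiency} part (showing $\{K_{1,r},P_4\}$ works). For necessity, I would use the two families $G_1^t=K_{1,t}$ and $G_2^t=K_t^h$ already introduced in the proof of Theorem~\ref{thm5}, together with a third family designed so that $rx_3$ grows while $sdiam_3$ stays bounded, and then argue that if the bound $rx_3(G)\le sdiam_3(G)+C_{XY}$ holds on all $(X,Y)$-free graphs, then $X$ and $Y$ must each be an induced subgraph of each of these families. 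Since $X,Y\neq P_3$, the graph $G_1^t=K_{1,t}$ forces (up to symmetry) that $X$ contains an induced $K_{1,3}$ or a long path, and one then checks the remaining families to pin $X=K_{1,r}$ ($r\ge 3$) and $Y=P_4$; here $G_2^t$ does the main work, since it is $K_{1,3}$-free, has $sdiam_3=5$, but $rx_3(G_2^t)\ge t+2$. One also needs a $P_4$-free but ``bad'' family; a natural candidate is a complete multipartite or blow-up construction whose $3$-rainbow index is unbounded while its Steiner diameter is small, forcing $Y$ (if it is not $K_{1,r}$) to be an induced subgraph of $P_4$, i.e.\ $P_4$ itself after the $P_3$ exclusion. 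The necessity direction is mostly diagram-chasing through small cases and is routine once the extremal families are in hand.

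The substantive content is the \emph{sufficiency} direction: if $G$ is connected, $K_{1,r}$-free and $P_4$-free, then $rx_3(G)\le sdiam_3(G)+C_{r}$ for a constant $C_r$ depending only on $r$. First I would exploit $P_4$-freeness: a connected $P_4$-free graph has diameter at most~$2$, so $sdiam_3(G)\ge 2$ already, and more importantly $P_4$-free graphs have a very rigid structure (they are cographs), which in particular gives a dominating edge or a dominating vertex, and bounds the diameter. Combined with $K_{1,r}$-freeness, the independence number of every neighborhood is bounded by $r-1$, so by Ramsey's theorem $R(r,r)$ any large vertex set contains a large clique; this lets one find a bounded-size connected dominating set $D$ with $G[D]$ of bounded diameter. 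The strategy is then: color the (bounded number of) edges inside and incident to $D$ with distinct fresh colors — this costs only a constant number of colors — and show that for every triple $S$ of vertices, each vertex of $S$ can be joined to $D$ by a single edge (domination) into a region where a rainbow $S$-tree through $D$ can be assembled; the edges from $V(G)\setminus D$ to $D$ can be colored cheaply (e.g.\ reusing few colors) because $P_4$-freeness forces these attachments to be ``local.'' The total is $rx_3(G)\le sdiam_3(G)+C_r$; in fact one expects $sdiam_3(G)$ to be an absolute constant here (at most~$5$ or so), so the claim reduces to proving $rx_3(G)$ is bounded by a constant on this graph class, which is the honest core of the theorem.

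The main obstacle I anticipate is controlling the coloring of the edge set $E[V(G)\setminus D,\,D]$ and \emph{within} $V(G)\setminus D$ while keeping the number of colors constant: a rainbow $S$-tree for a triple $S=\{x,y,z\}$ typically needs two or three ``spoke'' edges from $x,y,z$ into the dominating structure plus a rainbow path through $G[D]$, and we must ensure these spokes plus the internal path use no repeated color \emph{simultaneously}, for \emph{every} triple. The $K_{1,r}$-free hypothesis is what makes this possible — it caps how many vertices outside $D$ can attach to a common vertex of $D$ as an independent set, so a bounded palette on the spokes suffices via a careful Ramsey/greedy argument — but making this quantitative (and verifying the handful of degenerate configurations, e.g.\ when two of $x,y,z$ are non-adjacent vertices both dominated only by the same vertex of $D$) is where the real case analysis lives. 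I would organize sufficiency as a sequence of lemmas: (i) $P_4$-free $\Rightarrow$ bounded diameter and existence of a dominating vertex or edge; (ii) $(K_{1,r},P_4)$-free $\Rightarrow$ a connected dominating set $D$ with $|D|\le f(r)$ and $\mathrm{diam}(G[D])\le 2$; (iii) a coloring lemma producing a $3$-rainbow coloring of $G$ using at most $|E(G[D])| + g(r)$ colors; and then combine with Observation~\ref{obs1}. The necessity lemmas then complete the ``only if'' direction, and Figure~1's families plus one complete-multipartite family supply the needed counterexamples.
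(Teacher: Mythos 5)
Your necessity argument has a genuine gap: the family you propose to pin down $Y$ cannot do the job, and the family that actually does the work is missing. To force $Y=P_4$ one needs \emph{star-free} families with unbounded $rx_3-sdiam_3$ (so that, since they cannot contain $X=K_{1,r}$, they must contain $Y$), and one then intersects their connected induced subgraphs. The graph $G_2^t=K_t^h$ alone only gives $Y\overset{\scriptscriptstyle{\text{IND}}}{\subseteq}K_t^h$, so it does not exclude, say, $Y=K_4$, $Y=K_s^h$ or net-like graphs (recall that in the $rc$-setting of Theorem~\ref{thm4} such $Y$'s genuinely survive). The paper's second witness is the cycle family $G_3^t=C_t$: it is $K_{1,3}$-free, its only connected proper induced subgraphs are paths, and by Theorem~\ref{thm1} it has $rx_3(C_t)=t-2$ while $sdiam_3(C_t)=\lceil 2t/3\rceil$, so the difference is unbounded — this is exactly the new phenomenon compared with $rc$, where $rc(C_t)-diam(C_t)$ stays bounded. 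Combining $C_t$ with $K_t^h$ (whose longest induced path is $P_4$) forces $Y=P_4$. Your substitute — a $P_4$-free complete multipartite or blow-up family with small Steiner diameter and large $rx_3$ — cannot serve this purpose: if its parts are large it contains huge induced stars, hence contains $X$ and imposes no condition on $Y$ at all; if its parts are bounded (so that it is $K_{1,r}$-free) then it is $(K_{1,r},P_4)$-free and by the sufficiency direction (or by direct computation) its $rx_3-sdiam_3$ is bounded, so no such ``bad'' family exists. More fundamentally, a $P_4$-free family can never certify that $Y$ embeds in $P_4$; the logic runs through families that \emph{contain} $P_4$ but nothing longer. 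Also, your statement that $K_{1,t}$ forces ``$X$ to contain an induced $K_{1,3}$ or a long path'' is backwards: it forces $X$ to be an induced subgraph of a star, whence $X=K_{1,r}$, $r\ge 3$, after excluding $P_3$.

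On sufficiency your plan is in the right spirit and close to the paper's: the paper takes a maximum clique $S$ (shown dominating via $P_4$-freeness), splits $V\setminus S$ into a maximum independent set $X$ (of size at most $r-1$ by $K_{1,r}$-freeness, after finding a vertex $z\in S$ adjacent to all of $X$) and the rest $Y$, and exhibits an explicit coloring with at most $r+5$ colors, giving $rx_3(G)\le sdiam_3(G)+r+3$. Your version via a dominating vertex/edge of a cograph could likely be made to work, but as written it has a slip — the edges incident to the dominating set $D$ are \emph{not} bounded in number, so they cannot all get distinct fresh colors — and the actual coloring and triple-by-triple verification, which you yourself identify as ``where the real case analysis lives,'' is precisely the content that still needs to be supplied.
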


The proof of Theorem~\ref{thm6} will be divided into two parts.
We prove the necessity in Proposition~\ref{pro2},
and then we establish the sufficiency in Theorem~\ref{thm7}.

\begin{pro}\label{pro2}
Let $X,Y\neq P_3$ be a pair of connected graphs
for which there is a constant $C_{XY}$
such that every connected $(X,Y)$-free graph $G$
satisfies $rx_3(G)\leq sdiam_3(G)+C_{XY}$.
Then, (up to symmetry)
$X = K_{1,r},r\geq 3$ and $Y=P_4$.
\end{pro}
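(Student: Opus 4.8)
The plan is to mimic the structure used in the analogous rainbow-connection result (Theorem~\ref{thm4}), adapting each obstruction graph to the Steiner/3-rainbow setting. First I would record the basic reduction: since we are forbidding a pair $X,Y$ with $X,Y\neq P_3$, and since by Theorem~\ref{thm5} a single forbidden $P_3$ is the only singleton that works, at least one of $X,Y$ must be an induced subgraph of every graph in any infinite family of ``bad'' graphs whose $rx_3-sdiam_3$ is unbounded. So the strategy is to exhibit a short list of such bad families $\mathcal{G}_1,\mathcal{G}_2,\dots$ with the property that if $(X,Y)$ is a good pair then $X$ (or $Y$) is an induced subgraph of arbitrarily large members of each family; intersecting these constraints will pin $\{X,Y\}$ down to $\{K_{1,r}\ (r\ge 3),\,P_4\}$.

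The concrete families I would use are the three that already appear in the excerpt plus a path/cycle family: (i) the stars $G_1^t=K_{1,t}$, for which $rx_3=t$ while $sdiam_3=3$ (Theorem~\ref{thm5}), forcing one of $X,Y$ to be an induced subgraph of a large star, i.e. to be $K_{1,r}$ for some $r$ or an induced subgraph thereof (a star, or $P_3$, or $P_2$; the $P_3$ case is already excluded, and $P_2,K_{1,2}$ are too small since they would make $G$ trivial); (ii) the graphs $K_t^h=G_2^t$ obtained by attaching a pendant edge to each vertex of $K_t$, which is $K_{1,3}$-free with $rx_3\ge t+2$ but $sdiam_3=5$ — this kills the possibility that the star-side graph is itself $K_{1,3}$-free in a way that also handles $K_{1,r}$ for all $r\ge3$, and more importantly it forces the \emph{other} graph $Y$ to be an induced subgraph of $K_t^h$; (iii) a long-path family, e.g. $P_n$ itself (where $rx_3=sdiam_3$, so this is \emph{not} bad) — instead the right family here is one where $rx_3-sdiam_3\to\infty$ but which is $P_4$-free only for small members; a good candidate is a ``double star'' or a long path with pendant structure. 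The key observation is that the only connected graphs that are simultaneously induced subgraphs of arbitrarily large stars $K_{1,t}$ (hence have the form $K_{1,r}$) and rule out the net-like family $K_t^h$ are the stars $K_{1,r}$, $r\ge 3$; and then $Y$ must be an induced subgraph of $K_t^h$ for all $t$, which — once one also forbids $Y$ from being a large induced subgraph of the second bad family — leaves exactly $Y=P_4$.

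In detail, the argument I would write runs: let $(X,Y)$ be a good pair. Applying goodness to $\{G_1^t\}_{t}$, some member of $\{X,Y\}$, say $X$, embeds as an induced subgraph in $K_{1,t}$ for infinitely many $t$; since connected induced subgraphs of stars are stars, $X=K_{1,r}$ for some $r$, and $r\ge 3$ because $r\le 2$ gives $X\in\{P_2,P_3\}$, contradicting $X\neq P_3$ (and $P_2$-free graphs are edgeless, hence not connected of order $\ge2$). Next, $K_t^h$ is $K_{1,3}$-free, hence $K_{1,r}$-free for all $r\ge 3$, so $X=K_{1,r}$ does not occur as an induced subgraph of $K_t^h$; since $rx_3(K_t^h)-sdiam_3(K_t^h)\ge t-3\to\infty$, goodness forces $Y\overset{\scriptscriptstyle{\text{IND}}}{\subseteq}K_t^h$ for infinitely many $t$. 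Finally I would introduce one more bad family — a long path with a few pendant vertices, or a ``spider''/``broom'' $B_t$ consisting of a path $P_t$ with extra pendant edges at one end, chosen so that $rx_3(B_t)-sdiam_3(B_t)\to\infty$ (using Proposition~\ref{pro1}-type lower bounds for tree-like graphs) while $B_t$ is $Y$-free unless $Y$ is small — this forces $Y$ to be an induced subgraph of $B_t$ too. The intersection of ``induced subgraph of every large $K_t^h$'' and ``induced subgraph of every large $B_t$'' is a very short list of small graphs, and checking each candidate against a fourth family (e.g. large complete graphs $K_n$, where $rx_3=3$ — \emph{not} bad — so instead a blown-up structure) narrows it to $P_4$; one then verifies directly that $Y=P_4$ is consistent (it is an induced subgraph of $K_t^h$ and of $B_t$) and that nothing larger survives.

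The main obstacle I anticipate is the bookkeeping in the ``intersection'' step: choosing the right auxiliary bad families $B_t$ so that their $3$-rainbow index genuinely exceeds $sdiam_3$ by an unbounded amount (this requires a lower-bound lemma for $rx_3$ of graphs with many pendant or low-connectivity vertices, presumably proved elsewhere in the paper or reducible to Proposition~\ref{pro1} and Observation~\ref{obs1}), and then enumerating the finitely many connected graphs that are common induced subgraphs of all of $K_t^h$, $B_t$, and one more family, and ruling out each one except $P_4$. The star side is comparatively routine once Theorem~\ref{thm5}'s construction is in hand; the delicate part is certifying that $P_4$ is forced and not, say, $P_5$ or a small tree with a branch vertex, which is exactly where a carefully chosen third obstruction family (one that is $P_4$-free but contains $P_5$ and every small branched tree as induced subgraphs, with unbounded $rx_3-sdiam_3$) does the work.
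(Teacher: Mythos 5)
Your first two steps coincide with the paper's: the stars $G_1^t=K_{1,t}$ force (up to symmetry) $X=K_{1,r}$, $r\ge 3$, and the $K_{1,3}$-free graphs $G_2^t=K_t^h$ then force $Y$ to be an induced subgraph of $K_t^h$. The gap is the third obstruction family, which is where the proposition is actually decided, and the candidates you propose cannot do the job. Any such family must simultaneously (a) have $rx_3-sdiam_3$ unbounded and (b) be $K_{1,r}$-free for every $r\ge 3$ (equivalently, claw-free), since otherwise its large members contain $X$ and the goodness hypothesis forces nothing about $Y$. Your brooms, double stars and spiders are trees, so by Proposition~\ref{pro1} $rx_3=n-1$, and their Steiner 3-diameter falls short of $n-1$ only by roughly the number of extra pendant/branch edges; hence the difference is unbounded only when the maximum degree is unbounded, in which case the family contains every $K_{1,r}$ and fails (b), while a claw-free tree is a path, for which $rx_3=sdiam_3$ and (a) fails. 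Your closing specification of the desired family is also internally contradictory: a graph that contains an induced $P_5$ automatically contains an induced $P_4$, so no family can be ``$P_4$-free but contain $P_5$.''

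The family the paper uses is the cycles $G_3^t=C_t$: they are claw-free, and by Theorem~\ref{thm1}, $rx_3(C_t)=t-2$ while $sdiam_3(C_t)=\lceil 2t/3\rceil$, so the difference grows like $t/3$. With cycles in hand the intersection step you were worried about becomes immediate and needs no fourth family or case enumeration: $Y$ must be a connected induced subgraph of arbitrarily large $C_t$, hence a path, and the longest induced path in $K_t^h$ is $P_4$ (pendant--clique--clique--pendant), so $Y=P_4$ once the degenerate cases $P_1,P_2$ (and the excluded $P_3$) are discarded. Without an explicitly exhibited claw-free bad family of this kind, your argument does not rule out, e.g., $Y=P_5$ or a small claw-free non-path candidate, so as written the proof is incomplete.
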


\begin{proof}
Let $t$ be an arbitrarily large integer,
and set $G_3^t = C_t$.
We will also use the graphs $G_1^t$ and $G_2^t$ shown in Figure 1.

Consider the graph $G_1^t$. Since $sdiam_3(G_1^t) = 3$
but $rx_3(G_1^t) =  t$, we have, up to symmetry, $X=K_{1,r},r\geq 3$.
Then we consider the graphs $G_2^t$ and $G_3^t$.
It is easy to verify that $sdiam_3(G_2^t) = 5$
but $rx_3(G_2^t) \geq t+2$, and $sdiam_3(G_3^t) = \lceil\frac{2}{3}t\rceil$
while $rx_3(G_3^t) \geq t-2 \geq \frac{3}{2}(sdiam_3(G_3^t)-1)-2$, respectively.
Clearly, $G_2^t$ and $G_3^t$ are both $K_{1,3}$-free, so neither of them contains $X$,
implying that both $G_2^t$ and $G_3^t$ contain $Y$.
Since the maximum common induced subgraph of them is $P_4$,
we get that $Y=P_4$. This completes the proof.
\end{proof}

Next, we can prove that the converse of Proposition~\ref{pro2} is true.

\begin{thm}\label{thm7}
Let $G$ be a connected $(P_4,K_{1,r})$-free graph for some
$r\geq 3$. Then $rx_3(G)\leq sdiam_3(G)+r+3$.
\end{thm}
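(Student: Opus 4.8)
The plan is to exploit the structure forced by excluding both $P_4$ and $K_{1,r}$: a $P_4$-free connected graph has diameter at most $2$, so its $3$-Steiner diameter is small (at most $3$), and the $K_{1,r}$-free condition bounds independent sets in neighborhoods, which controls how many colors a ``local'' Steiner tree can need. Concretely, I would first recall that a connected $P_4$-free graph $G$ satisfies $\mathrm{diam}(G)\le 2$, hence $sdiam_3(G)\le 3$ (in fact any three vertices are connected by a tree with at most one ``center''-type vertex at distance $\le 2$ from all of them, by a standard argument for $P_4$-free graphs). So it suffices to prove the absolute bound $rx_3(G)\le 2+r+3 = r+5$, i.e.\ a constant number of colors depending only on $r$, and the claimed inequality follows.

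The core of the argument is to build a small \emph{connected dominating set} $D$ of $G$ and color wisely. In a connected $P_4$-free graph, one can find a connected dominating set of bounded size; I would aim for a dominating \emph{clique} or a dominating set inducing a graph of small diameter (this is classical for $P_4$-free graphs — every connected $P_4$-free graph on at least two vertices has a dominating edge, in fact a dominating clique). Fix such a $D$ with $G[D]$ connected and $|D|$ small. Then give the edges of $G[D]$ their own fresh colors (only a bounded number, since $|D|$ is bounded — here the $K_{1,r}$-freeness is again what keeps $|D|$ under control, since a large clique is fine but we must argue $D$ itself need not be large). For each vertex $v\notin D$ pick one neighbor $f(v)\in D$; the edges $\{vf(v): v\notin D\}$ form a set of ``pendant-type'' edges to be colored from a small \emph{reused} palette. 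The key point: for any $3$-set $S=\{x,y,z\}$, a candidate $S$-tree is obtained by taking, for each $s\in S\setminus D$, the edge $sf(s)$, together with a rainbow path inside $G[D]$ joining the (at most three) landing points $f(x),f(y),f(z)$ and the points of $S\cap D$. Since $G[D]$ is connected with bounded order, a spanning tree of $G[D]$ rainbow-colored with $|D|-1$ distinct colors handles the ``inside'' part. The only danger is a color clash among the at most three pendant edges $sf(s)$, or between a pendant edge and an edge of $G[D]$; the latter is avoided by using disjoint palettes, and the former is handled by ensuring the pendant palette is large enough, and by using the $K_{1,r}$-free condition to argue that the pendant edges at any single vertex of $D$ can be colored so that few clashes occur — this is where the ``$+r$'' in the bound comes from.

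The main obstacle, and the place where $K_{1,r}$-freeness does real work, is controlling the pendant edges when many vertices of $V(G)\setminus D$ attach to the \emph{same} vertex $w\in D$: if $x,y$ both have $f(x)=f(y)=w$ and $c(xw)=c(yw)$, then the tree $xw + wy + (\text{path to }z)$ is \emph{not} rainbow. To fix this I would partition, for each $w\in D$, the set $f^{-1}(w)$ and color the edges from $w$ so that any two such edges receiving the same pendant color have their other endpoints nonadjacent in a controlled way; since $G$ is $K_{1,r}$-free, $w$ together with an independent set in $f^{-1}(w)$ of size $\ge r$ would create a forbidden $K_{1,r}$, so each color class at $w$ meeting the independent part has size $<r$ — this lets us re-route through a common neighbor (using $P_4$-freeness, which forces lots of adjacency) when a clash would otherwise occur, at the cost of $O(r)$ extra colors. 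I would carry out the steps in this order: (1) reduce to an absolute bound via $sdiam_3(G)\ge 2$; (2) produce a bounded dominating clique/connected dominating set $D$ from $P_4$-freeness; (3) fix the retraction $f:V(G)\setminus D\to D$; (4) assign $|D|-1$ colors to a spanning tree of $G[D]$ and a fresh, reused palette of size $O(r)$ to the pendant edges, using $K_{1,r}$-freeness to bound clash classes; (5) verify that every $3$-set gets a rainbow tree, treating separately how many of $x,y,z$ lie in $D$ and whether any of their images coincide; (6) tally the colors to get $\le sdiam_3(G)+r+3$.
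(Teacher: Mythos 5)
Your reduction to an absolute bound is fine in substance (though you quote the wrong inequality: what you need is $sdiam_3(G)\ge 2$, which holds by the general lower bound $sdiam_3\ge k-1$, not the upper bound $sdiam_3(G)\le 3$ coming from $P_4$-freeness), and your general instinct---find a dominating structure, color it cheaply, and use $K_{1,r}$-freeness to bound independent sets among the attached vertices---is the right one. But the proposal has a genuine gap exactly at the point where the theorem is hard: the case of a triple $\{u,v,w\}$ of pairwise nonadjacent vertices outside the dominating set, possibly all hanging off the same dominating vertex. Your plan colors the pendant edges from a small reused palette and, when two of the three pendant edges clash, proposes to ``re-route through a common neighbor \dots at the cost of $O(r)$ extra colors.'' This is not an argument: the rerouting edges are themselves pendant-type edges drawn from the same reused palette (or edges inside $V\setminus D$, which you color in bulk), so nothing guarantees the clash disappears, and no coloring scheme or color count is given that certifies the total stays at $r+5$. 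You never identify a mechanism that systematically produces three pairwise color-disjoint attachments for three pairwise nonadjacent outside vertices, and that is the crux.

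For comparison, the paper resolves exactly this point as follows. It takes $S$ to be a \emph{maximum clique} (not a small dominating set; $G[S]$ needs only $3$ colors since $rx_3(K_n)\le 3$), shows via $P_4$-freeness that $S$ dominates, takes $X$ a maximum independent set of $G[V\setminus S]$ and $Y$ the rest, and proves (again via $P_4$-freeness) that some single vertex $z\in S$ is adjacent to \emph{all} of $X$, whence $|X|\le r-1$ by $K_{1,r}$-freeness. The edges $zx_i$ and the edges from $x_i$ to $Y$ get private colors $i$ and $i+1$, while $E[S,Y]$ and $E(G[Y])$ each get one bulk color. The hard case is then handled by a Hall-type property: since $X$ is a maximum independent set, every independent set $W\subseteq Y$ satisfies $|N_X(W)|\ge |W|$, so three pairwise nonadjacent vertices of $Y$ have three \emph{distinct} attachment vertices in $X$, whose private colors make the resulting tree rainbow. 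Your proposal contains no analogue of the common neighbor $z$, the private colors on $X$, or the Hall property, so as it stands it does not prove the statement. (Also, your concern that $K_{1,r}$-freeness is needed to keep the dominating set small is misplaced---a dominating clique can be large and still cheap to color, which is precisely how the paper uses $rx_3(K_n)\le 3$.)
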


{\noindent\bf Proof.}
Let $G$ be a connected $(P_4,K_{1,r})$-free graph $(r\geq 3)$.
Then, $sdiam_3(G)\geq 2$. For simplicity, we set $V=V(G)$.
Let $S\subseteq V$ be the maximum clique of $G$.

{\bf Claim 1:} $S$ is a dominating set.

\begin{proof}
Assume that there is a vertex $y$ at distance $2$ from $S$.
Let $yxu$ be a shortest path from $y$ to $S$, where $u\in S$.
Because $S$ is the maximum clique, there is some $v\in S$
such that $vx\notin E(G)$. Thus the path $vuxy\cong P_4$,
a contradiction. So $S$ is a dominating set.
\end{proof}

Let $X$ be the maximum independent set of $G[V\setminus S]$ and
$Y=V\setminus(S\cup X)$. Then for any vertex $y\in Y$, $y$ is adjacent to
some $x\in X$. Furthermore, for any independent set $W$ of graph
$G[Y]$, $|N_X(W)| \geq |W|$ since $X$ is maximum.

{\bf Claim 2:} There is a vertex $v\in S$ such that $v$
is adjacent to all the vertices in $X$.

\begin{proof}
Suppose that the claim fails. Let $u$ be the vertex of $S$
with the largest number of neighbors in $X$.
Set $X_1=N_X(u),\ X_2=X\setminus X_1$. Then, $X_2\neq \emptyset$
according to our assumption. Pick a vertex $w$ in $X_2$.
Then, $uw\notin E(G)$. Let $v$ be a neighbor of $w$ in $S$.
For any vertex $z$ in $X_1$, $G[{w,v,u,z}]$
can not be an induced $P_4$, so $vz$ must be an edge of $G$.
Thus, $N_X(v)\supseteq N_X(u)\cup \{w\}$, contradicting
the maximum of $u$.
\end{proof}

Let $z$ be the vertex in $S$ which is adjacent to
all the vertices of $X$.
Set $X=\{x_1,x_2,\dots,x_\ell\}$. Then, $0\leq \ell\leq r-1$
since $G$ is $K_{1,r}$-free.
Now we demonstrate a $3$-rainbow coloring of $G$
using at most $\ell+6$ colors.
Assign color $i$ to the edge $zx_i$, and $i+1$
to the edge $x_iy$ where $1\leq i \leq \ell$ and
$y\in Y$. Color $E[S,Y]$ with color $\ell+2$
and $E(G[Y])$ with color $\ell+3$.
Give a $3$-rainbow coloring of $G[S]$ using colors from
$\{\ell+4,\ell+5,\ell+6\}$. And color the remaining edges arbitrarily
(e.g., all of them with color 1).
Next, we prove that this coloring is a $3$-rainbow coloring
of $G$.

Let $W=\{u,v,w\}$ be a $3$-subset of $V$.

$(i)\ \{u,v,w\}\subseteq S\cup X$. There is a rainbow tree
containing $W$.

$(ii)\ \{u,v\}\subseteq S\cup X, w\in Y$. We
can find a rainbow tree containing an edge in $E[S,Y]$ that connects $W$.

$(iii)\ u\in S\cup X, \{v,w\}\subseteq Y$.

\ \ $a)$ If $vw\in E(G)$, then there is a rainbow tree containing the edge $vw$
that connects $W$.

\ \ $b)$ If $vw\notin E(G)$, then we have $|N_X(\{v,w\})|\geq |\{v,w\}|=2$. So
there are two vertices $x_i$ and $x_j(i\neq j)$ in $X$ adjacent to $v$
and $w$, respectively.
As $i+1\neq j+1$, so either $i+1\neq c(zu)$ or $j+1\neq c(zu)$.
Without loss of generality, we assume that $i+1\neq c(zu)$
and $s$ is a neighbor of $w$ in $S$.
Then there is a rainbow tree containing the edges $zu,uv,sw,sz$ if
$u=x_i$ or the edges $zu,zx_i,x_iv,sw,sz$ if $u\neq x_i$.

$(iv)\ \{u,v,w\}\subseteq Y$.

\ \ $a)$ If $\{uv,vw,uw\}\cap E(G)\neq \emptyset$, for example, $uv\in E(G)$,
then we have a rainbow tree containing the edges $zx_i,x_iu,uv,sw,sz$
where $x_i$ is a neighbor of $u$ in $X$ and $s$ is a neighbor of $w$
in $S$.

\ \ $b)$ If $\{uv,vw,uw\}\cap E(G)= \emptyset$, then we have $|N_X\{u,v,w\}|\geq
|\{u,v,w\}|=3$, so we can find three distinct vertices $x_i,x_j,x_k$
in $X$ such that $\{x_iu,x_jv,x_kw\}\subseteq E(G)$. We may assume that
$i<j<k$, so $k+1\notin \{i,j,k,i+1,j+1\}$ and
$k\neq i+1$.
Then there is a rainbow tree containing
the edges $zx_i,x_iu,zx_k,x_kw,sv,sz$ where $s$ is a neighbor of $v$ in $S$.

Thus the coloring is a $3$-rainbow coloring of $G$ using at most
$\ell+6\leq r+5\leq sdiam_3(G)+r+3$ colors. The proof is complete.
\ \ \ \ \ \ \ \ \ \ \ \ \ \ \ \ \ \ \ \ \ \ \ \ \ \ \ \ \ \ \ \ \ \
\ \ \ \ \ \ \ \ \ \ \ \ $\blacksquare$

Combining Proposition~\ref{pro2} and Theorem~\ref{thm7}, we can easily get Theorem~\ref{thm6}.

{\noindent\bf Remark}
When the maximum independent set of $G[V\setminus S]$, $X$, satisfies $|X|=\ell\geq4$, we just need
$\ell+5$ colors in the proof of Theorem~\ref{thm7}:
for the edges $x_\ell y$, we can color them with color $1$ instead of
color $\ell+1$. It only matters when the case $\{u,v,w\}\subseteq Y$ and $\{uv,vw,uw\}\cap E(G)= \emptyset$ happens.
Suppose $\{x_iu,x_jv,x_kw\}\subseteq E(G)$ and $i<j<k$. If $i\neq 1$ or $k\neq\ell$, it is the case in the proof above. So we turn to the case when $i=1$ and $k=l$. If $j=2$, then $j+1<4\leq\ell$ (that is why we need
the condition $\ell\geq4$). Thus, there is a rainbow tree containing
the edges $zx_j,x_jv,zx_k,x_kw,su,sz$ where $s$ is a neighbor of $u$ in $S$.
If $j\neq 2$, then there is a rainbow tree containing the edges $zx_i,x_iu,zx_j,x_jv,sw,sz$.

\section{Forbidden triples}
Now, we continue to consider more and obtain an analogous result which characterizes all forbidden triples $\mathcal{F}$ for which there is a constant $C_\mathcal{F}$
such that $G$ being $\mathcal{F}$-free implies $rx_3(G)\leq sdiam_3(G)+C_\mathcal{F}$. We exclude the cases which are covered by Theorems 5 and 6. We set:

$\mathfrak{F}_1=\{\{P_3\}\}$,

$\mathfrak{F}_2=\{\{K_{1,r},P_4\}| \ r\geq 3\}$,

$\mathfrak{F}_3=\{\{K_{1,r},Y,P_\ell\}| \ r\geq 3, Y\overset{\scriptscriptstyle{\text{IND}}}{\subseteq} K_s^h, s\geq 3,\ell >4\}$.

\begin{thm}\label{thm8}
Let $\mathcal{F}$ be a  family of connected graphs with $|\mathcal{F}|=3$ such that $\mathcal{F}\nsupseteq\mathcal{F}'$ for any $\mathcal{F}'\in \mathfrak{F}_1\cup \mathfrak{F}_2$.
Then there is a constant $C_\mathcal{F}$
such that every connected $\mathcal{F}$-free graph $G$
satisfies $rx_3(G)\leq sdiam_3(G)+C_\mathcal{F}$,
if and only if
$\mathcal{F}\in \mathfrak{F}_3$.
\end{thm}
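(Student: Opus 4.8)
The plan is to follow the same two-part structure used for the forbidden pair case: prove necessity by exhibiting a family of graphs with bounded $sdiam_3$ but unbounded $rx_3$, and then prove sufficiency by a direct coloring argument that uses the Ramsey-type finiteness of structure forced by forbidding $K_{1,r}$ together with a long path. For necessity, suppose $\mathcal{F}=\{A,B,C\}$ has the required bounded-difference property and avoids containing any family in $\mathfrak{F}_1\cup\mathfrak{F}_2$. As in Proposition~\ref{pro2}, testing against $G_1^t=K_{1,t}$ forces one member, say $A$, to be a star $K_{1,r}$ with $r\geq 3$ (otherwise all $G_1^t$ would be $\mathcal{F}$-free with unbounded difference). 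Then test against $G_2^t=K_t^h$ and against the cycle $G_3^t=C_t$ (both $K_{1,3}$-free, hence not containing $A$), which forces $B$ and $C$ between them to hit every $K_t^h$ and every long cycle. Since a long cycle is $\{K_{1,3},K_4,\ldots\}$-free and has arbitrarily long induced paths, and since we have excluded the possibility that $\{A,B\}$ or $\{A,C\}$ already forms $\{K_{1,r'},P_4\}$, one of $B,C$ — say $C$ — must be an induced subgraph of a long path, i.e.\ $C=P_\ell$; and $\ell>4$, since $\ell\leq 4$ would put $\{A,C\}\supseteq\{K_{1,r},P_4\}\in\mathfrak{F}_2$ (using that $P_4$-free is stronger than $P_\ell$-free for $\ell>4$ in the wrong direction — one must be careful here). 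The remaining member $B$ must then be an induced subgraph of $K_s^h$ for some $s\geq 3$, because $K_s^h$ for large $s$ is $K_{1,3}$-free and $P_5$-free (its longest induced path has $5$ vertices), so it contains neither $A$ nor $C$ and hence must contain $B$. This yields $\mathcal{F}\in\mathfrak{F}_3$.

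For sufficiency, let $\mathcal{F}=\{K_{1,r},Y,P_\ell\}\in\mathfrak{F}_3$ with $Y\overset{\scriptscriptstyle{\text{IND}}}{\subseteq}K_s^h$ and $\ell>4$, and let $G$ be connected and $\mathcal{F}$-free. Since $G$ is $P_\ell$-free it has bounded diameter in terms of $\ell$; more usefully, a longest induced path has fewer than $\ell$ vertices, so one can build a short dominating structure. The idea is to imitate the proof of Theorem~\ref{thm7}: take a maximum clique $S$ (its size is bounded by Ramsey's theorem since $G$ is $K_{1,r}$-free and contains no large induced independent-plus-clique configuration forced by $Y\subseteq K_s^h$ being forbidden), show that $S$ together with a short path-like ``skeleton'' reaching the rest of $G$ forms a connected dominating set of bounded size, and then color: give the skeleton a rainbow coloring, rainbow-color $G[S]$ with at most $3$ colors (Theorem~\ref{thm2}), use a bounded palette of distinct colors on the ``star edges'' from the skeleton into the bounded independent set of leftover vertices, and a couple of global colors for the remaining bulk edges. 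Because every $3$-set $W$ either lies near the clique, near the skeleton, or has all three vertices in the ``outer'' part where the independent-set/domination bound gives distinct private connectors, one checks case by case (mirroring cases (i)--(iv) of Theorem~\ref{thm7}) that a rainbow $S$-tree exists. The total number of colors is a constant $C_\mathcal{F}$ depending only on $r,s,\ell$, and since $sdiam_3(G)\geq 2$ always, $rx_3(G)\leq sdiam_3(G)+C_\mathcal{F}$ follows.

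The main obstacle I anticipate is the sufficiency direction, specifically controlling the global structure of a $\{K_{1,r},Y,P_\ell\}$-free graph when $Y$ is an arbitrary induced subgraph of $K_s^h$ rather than $Y=P_4$: forbidding $P_4$ collapsed the graph onto a dominating clique very cleanly (Claims 1 and 2 of Theorem~\ref{thm7}), but forbidding only a longer path $P_\ell$ allows induced paths of length up to $\ell-2$, so the diameter, while bounded, need not be tiny, and the clique need no longer dominate. The fix will be to work with a BFS-type layered decomposition rooted at a central vertex: there are at most $\lceil(\ell-1)/2\rceil$ layers, each layer has bounded ``independence'' because $G$ is $K_{1,r}$-free and $Y$-free, and one stitches together one bounded clique (or bounded-diameter dense blob) per layer into a connected dominating set whose total order is bounded by a function of $r,s,\ell$ only. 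Making the ``$Y\subseteq K_s^h$ forces each layer to look like a clique-with-pendants of bounded size'' step precise — presumably via Ramsey's theorem applied within a layer — is the crux; once a bounded connected dominating set $D$ is in hand, the coloring scheme and the $3$-set verification are routine adaptations of Theorem~\ref{thm7}, with $|D|+3$ or so colors sufficing.
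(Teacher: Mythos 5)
Your necessity argument is essentially the paper's: it tests $\mathcal{F}$ against $K_{1,t}$, $K_t^h$ and long cycles and uses the exclusion of $\mathfrak{F}_1\cup\mathfrak{F}_2$ to pin down $\{K_{1,r},Y,P_\ell\}$ with $Y$ an induced subgraph of $K_s^h$ and $\ell>4$; apart from small slips (the longest induced path in $K_s^h$ has $4$ vertices, not $5$, though the $P_5$-freeness you need is still true; and to conclude that the member hit by the cycles is a \emph{path} you should use cycles of two different large lengths, as the paper does), this direction is sound. The sufficiency direction, however, has a genuine gap exactly where you yourself locate ``the crux.'' The entire argument hinges on the claim that in a connected $(K_{1,r},K_s^h)$-free graph every distance level $N_G^i(c)$ from a fixed vertex $c$ has independence number bounded by a constant depending only on $r,s,i$ (the paper's Lemma~2 and Corollary~1, imported from Brousek--Holub--Ryj\'a\v{c}ek--Vr\'ana). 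You do not prove this; you only say ``presumably via Ramsey's theorem applied within a layer,'' and that hint points at the wrong mechanism: Ramsey applied inside a single layer gives nothing, because a layer may simultaneously contain huge cliques and (a priori) huge independent sets. The actual proof is an induction over the layers: given a large independent set $M^0\subseteq N_G^i(c)$, one passes to its shortest-path predecessors $M^1\subseteq N_G^{i-1}(c)$, uses $K_{1,r}$-freeness to show $|M^1|\geq |M^0|/(r-2)$, applies Ramsey together with the induction hypothesis on level $i-1$ to extract a clique $K_{s(2r-3)}$ inside $M^1$, and then runs a careful matching/selection argument between that clique and its neighbors in $M^0$ to exhibit an induced $K_s^h$, a contradiction. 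None of this is routine, and without it your ``each layer has bounded independence'' step is unsupported.

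A secondary, related imprecision: you reformulate the goal as building a bounded-size connected dominating set $D$ and then claim ``$|D|+3$ or so colors'' suffice, calling the verification routine. A bounded connected dominating set by itself does not bound $rx_3$ (the star $K_{1,n}$ has a dominating vertex but $rx_3=n$), and the layers of a $(K_{1,r},K_s^h,P_\ell)$-free graph need not be ``clique-with-pendants of bounded size'' --- only their independence number is bounded. What actually makes the coloring work, and what the paper does, is a layer-by-layer version of the Theorem~7 coloring: give each vertex of a maximum independent set of the next layer its own two colors (toward the inner ball and toward the rest of its layer), spend three shared colors per layer, and iterate over at most $\ell-2$ layers (note also that the number of layers is $ecc(c)\leq diam(G)\leq\ell-2$; your bound $\lceil(\ell-1)/2\rceil$ presumes $rad\leq\lceil diam/2\rceil$, which is false in general, though harmless here). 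You gesture at the right coloring, but since it, too, rests on the unproven bounded-independence lemma, the sufficiency half of your proposal is incomplete as it stands.
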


First of all, we prove the necessity of the triples given by Theorem 8.

\begin{pro}\label{pro3}
Let $X,Y,Z\neq P_3$ be connected graphs,
$\{X,Y,Z\}\nsupseteq \mathcal{F}'$ for any $\mathcal{F}'\in \mathfrak{F}_2$,
for which there is a constant $C_{XYZ}$
such that every connected $(X,Y)$-free graph $G$
satisfies $rx_3(G)\leq sdiam_3(G)+C_{XYZ}$.
Then, (up to symmetry)
$X = K_{1,r} (r\geq 3) ,Y\overset{\scriptscriptstyle{\text{IND}}}{\subseteq}K_s^h (s\geq 3),$ and $Z=P_\ell (\ell>4)$.
\end{pro}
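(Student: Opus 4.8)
The plan is to exhibit, for each candidate forbidden triple that is \emph{not} of the form $\{K_{1,r},Y,P_\ell\}$ with $Y\overset{\scriptscriptstyle{\text{IND}}}{\subseteq}K_s^h$ and $\ell>4$, an infinite family of connected graphs witnessing that the difference $rx_3-sdiam_3$ is unbounded, and to argue that such a family avoids all three members of $\{X,Y,Z\}$ as induced subgraphs. As in the proofs of Theorem~\ref{thm5} and Proposition~\ref{pro2}, the three basic test families are $G_1^t=K_{1,t}$, $G_2^t=K_t^h$ (the complete graph with a pendant edge on each vertex), and $G_3^t=C_t$; for each of these, $sdiam_3$ grows only linearly (and slowly) while $rx_3$ is forced to be essentially as large as $t$, so any of them witnesses unboundedness. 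The strategy is exactly the ``each graph in $\mathcal{F}$ must appear in each test family'' argument: if none of $X,Y,Z$ is induced in $G_i^t$ for arbitrarily large $t$, then $G_i^t$ is a counterexample, contradiction; hence each test family must contain at least one of $X,Y,Z$.

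The key steps, in order, are as follows. First I would note that, since $G_1^t=K_{1,t}$ is a counterexample and every induced subgraph of it with $\geq 3$ vertices is a star, at least one of $X,Y,Z$ — say $X$ — is a star $K_{1,r}$; and since $X\neq P_3$ we get $r\geq 3$. (If two of them were stars, one would be an induced subgraph of the other and the triple would, up to symmetry, effectively reduce to a pair containing $K_{1,r}$, which by the exclusion of $\mathfrak{F}_2$ and the structure of Theorem~\ref{thm6} is already handled or fails; this needs a short argument.) Second, because $K_{1,r}$ is not induced in the $K_{1,3}$-free graphs $G_2^t=K_t^h$ and $G_3^t=C_t$, both of these families must contain $Y$ and must contain $Z$ as induced subgraphs for all large $t$. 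Third, I examine what connected graphs are induced subgraphs of $K_t^h$ for all large $t$: these are exactly graphs of the form ``a clique with some pendant edges attached to distinct clique vertices,'' i.e. induced subgraphs of $K_s^h$ for some fixed $s$ (including cliques, paths $P_3$, $P_4$, etc.). Fourth, I examine connected induced subgraphs of $C_t$ for all large $t$: these are exactly paths $P_m$. Fifth, combining the third and fourth observations: one of $\{Y,Z\}$ must be simultaneously an induced subgraph of $K_t^h$ (all large $t$) and of $C_t$ (all large $t$) — but the only connected graphs that are induced in $C_t$ are paths, and among paths only $P_1,P_2,P_3,P_4$ are induced in $K_t^h$; since no member equals $P_3$ and we have excluded the forbidden pair $\{K_{1,r},P_4\}$ (so it cannot be that one of $Y,Z$ is $P_4$ while the other plays no constraining role), the member forced to lie in both families must be a path $P_\ell$ with $\ell>4$ — wait: a path $P_\ell$ with $\ell>4$ is \emph{not} induced in $C_t$. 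So the resolution must be that $P_\ell$ (the long path) is the member contained in $C_t$ only, and is forced there because $Y$ handles $K_t^h$; that is, $Z=P_\ell$ must be induced in $C_t$ for all large $t$, forcing $Z$ to be a path, and it must be the member that is \emph{not} needed for $K_t^h$, leaving $Y$ to handle $K_t^h$, hence $Y\overset{\scriptscriptstyle{\text{IND}}}{\subseteq}K_s^h$ for some $s\geq 3$. Finally I would pin down $\ell>4$: if $\ell\le 4$ then $P_\ell\in\{P_1,P_2,P_3,P_4\}$, so $Z$ is a (short) path; $Z\neq P_3$ excludes $P_3$, and $Z\in\{P_1,P_2\}$ is impossible for a graph with $\geq 3$ vertices implicitly assumed, leaving $Z=P_4$, but then $\{X,Z\}=\{K_{1,r},P_4\}\in\mathfrak{F}_2$, contradicting the hypothesis $\{X,Y,Z\}\nsupseteq\mathcal{F}'$ for $\mathcal{F}'\in\mathfrak{F}_2$. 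Hence $\ell\ge 5$, i.e. $\ell>4$, which is precisely the assertion.

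The main obstacle I expect is the bookkeeping around ``up to symmetry'' and the role assignments: one must argue carefully that exactly one of the three graphs is a star, exactly one is the long path $P_\ell$, and the third is an induced subgraph of $K_s^h$, ruling out degenerate overlaps (e.g. two stars, or the ``$K_s^h$-piece'' secretly being a path that could double as the $C_t$-witness, or $Y$ being a short path so that $\{Y,Z\}$ or $\{X,Y\}$ already forms an element of $\mathfrak{F}_2$). This amounts to an exhaustive but elementary case analysis on which test family each of $X,Y,Z$ can belong to, using the two structural facts ``connected induced subgraphs of $C_t$ (all large $t$) are paths'' and ``connected induced subgraphs of $K_t^h$ (all large $t$) are induced subgraphs of a fixed $K_s^h$.'' The quantitative inequalities for $rx_3$ of the test families — $rx_3(K_{1,t})=t$, $rx_3(K_t^h)\ge t+2$ with $sdiam_3=5$, and $rx_3(C_t)\ge t-2$ with $sdiam_3=\lceil 2t/3\rceil$, already recorded in the proof of Proposition~\ref{pro2} via Theorem~\ref{thm1} and Proposition~\ref{pro1} — are all I need and require no new computation.
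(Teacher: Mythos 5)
Your proposal is correct and follows essentially the same route as the paper's proof of Proposition~\ref{pro3}: test the triple against $G_1^t=K_{1,t}$, $G_2^t=K_t^h$ and $G_3^t=C_t$, use that the latter two are $K_{1,3}$-free (and large cycles contain only induced paths) to force $X=K_{1,r}$ ($r\geq 3$), $Y\overset{\scriptscriptstyle{\text{IND}}}{\subseteq}K_s^h$ and $Z=P_\ell$, and exclude $\ell\leq 4$ via $Z\neq P_3$ and the forbidden pair $\{K_{1,r},P_4\}\in\mathfrak{F}_2$. The mid-stream correction in your fifth step ends up exactly where the paper's ``up to symmetry'' bookkeeping does, so there is no substantive difference between the two arguments.
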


\begin{proof}
Let $t$ be an arbitrarily large integer, and let $G_1^t,G_2^t,G_3^t$ be the graphs defined in the proof of Proposition 2.

Firstly, we consider the graph $G_1^t$. Up to symmetry, we have $X=K_{1,r},r\geq 3$ (for the case $r=2$ is excluded by the assumptions).
Secondly, we consider the graph $G_2^t$.
The graph $G_2^t$ does not contain $X$, since it is $K_{1,3}$-free. Thus, up to symmetry, we have $G_2^t$ contains $Y$,
implying $Y\overset{\scriptscriptstyle{\text{IND}}}{\subseteq}K_s^h$ for some $s\geq 3$ (for the case $s\leq 2$ is excluded by the assumptions).
Finally, we consider the graphs $G_3^t$ and $G_3^{t+1}$.
Clearly, they are $(K_{1,3},K_3^h)$-free, so both of them contain neither $X$ nor $Y$. Hence,
we get that $Z=P_\ell$ for some $\ell> 4$ (for the case $\ell\leq 4$ is excluded by the assumptions).

This completes the proof.
\end{proof}

It is easy to observe that if $X\overset{\scriptscriptstyle{\text{IND}}}{\subseteq}
X'$, then every $(X,Y,Z)$-free graph is also $(X',Y,Z)$-free.
Thus, when proving the sufficiency of Theorem
\ref{thm8},
we will be always interested in \emph{maximal triples} of forbidden subgraphs,
i.e., triples $X,Y,Z$ such that, if replacing one of $X,Y,Z$, say $X$, with
a graph $X'\neq X$ such that $X\overset{\scriptscriptstyle{\text{IND}}}{\subseteq}
X'$, then the statement under consideration is not true for $(X',Y,Z)$-free graphs.

For every vertex $c\in V(G)$ and $i\in \mathbb{N}$, we set $\alpha_i(G,c)=$max$\{|M|\big|M\subseteq N_G^i[c], M$ is independent$\}$ and $\alpha_i^0(G,c)=$max$\{|M^0|\big|M^0\subseteq N_G^i(c), M^0$ is independent$\}$.

\begin{lem}~\cite{P.2}\label{lem2}
Let $r,s,i\in\mathbb{N}$. Then there is a constant $\alpha(r,s,i)$ such that, for every connected $(K_{1,r},K_s^h)$-free graph $G$ and for every $c\in V(G)$, $\alpha_i(G,c)< \alpha(r,s,i)$.
\end{lem}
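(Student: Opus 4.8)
The plan is to prove the bound $\alpha_i(G,c) < \alpha(r,s,i)$ by induction on $i$, extracting the constant from Ramsey's theorem at each step. The base case $i=0$ is trivial, since $N_G^0[c] = \{c\}$ gives $\alpha_0(G,c) = 1$; and the case $i=1$ is essentially the $K_{1,r}$-freeness hypothesis: any independent set inside $N_G[c]$ contains at most $r-1$ vertices other than $c$ (otherwise $c$ together with $r$ pairwise non-adjacent neighbours forms an induced $K_{1,r}$), so $\alpha_1(G,c) \le r$. The real content is the inductive step, where one must bound an independent set $M \subseteq N_G^i[c]$ in terms of the bound already available at distance $i-1$.

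\medskip

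For the inductive step I would fix an independent set $M \subseteq N_G^i[c]$ and push each vertex of $M$ one step toward $c$: for every $v \in M$ at distance exactly $i$ choose a neighbour $v'$ with $d_G(v',c) = i-1$, and for $v$ at distance $< i$ set $v' = v$. Let $M' = \{v' : v \in M\}$; then $M' \subseteq N_G^{i-1}[c]$. The set $M'$ need not be independent, so here is where Ramsey's theorem enters: two-colour the pairs of $M'$ by adjacency in $G$. A red clique would be a clique of pre-images of size $s$ whose vertices, together with their distinct pendant-like neighbours in $M$ (the $v$'s, which are pairwise non-adjacent and non-adjacent to the wrong $v'$'s by independence of $M$ and by the shortest-path choice), would build an induced $K_s^h$ — contradiction. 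A blue clique of size $a$ gives an \emph{independent} subset of $M'$ of size $a$ sitting inside $N_G^{i-1}[c]$, hence $a < \alpha(r,s,i-1)$ by the induction hypothesis. Taking $a = \alpha(r,s,i-1)$ and the Ramsey number $R(s, \alpha(r,s,i-1))$ bounds $|M'|$, and since the map $v \mapsto v'$ is at most $(r-1)$-to-one (the fibre over a fixed $v'$ is an independent set contained in $N_G[v']$, so has size $\le r-1$ by $K_{1,r}$-freeness applied at $v'$), we get $|M| \le (r-1)\,|M'| < (r-1)\,R(s,\alpha(r,s,i-1))$. So one may take $\alpha(r,s,i) = (r-1)\,R(s,\alpha(r,s,i-1)) + 1$, with $\alpha(r,s,0) = 2$.

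\medskip

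The main obstacle is making the induced-$K_s^h$ argument in the red-clique case fully rigorous — one has to verify that the $s$ chosen pre-images really are pairwise adjacent \emph{and} that attaching the $s$ corresponding vertices of $M$ produces an \emph{induced} copy of $K_s^h$ with no spurious edges. The potential spurious edges are: (a) between two vertices of $M$ — impossible since $M$ is independent; (b) between a vertex $v\in M$ and the pre-image $w'$ of a different $w\in M$ — if such an edge existed then $v$ would be within distance one of $N_G^{i-1}[c]$, which is consistent with $d_G(v,c)=i$, so this edge is not automatically forbidden and must be handled by refining the choice, e.g. by first passing to a large subfamily of $M$ on which the "push-down" behaves uniformly, or by absorbing the bad configurations into another Ramsey colouring with more colours. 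One clean way around (b) is to iterate: apply the argument to $M$ restricted to distance exactly $i$ (the distance-$< i$ part is controlled by $\alpha_{i-1}$ directly), and when selecting the monochromatic clique also encode, via additional colour classes, whether $v$ is adjacent to $w'$; a clique that is red-in-adjacency and additionally "independent of the other pre-images" then genuinely yields the induced $K_s^h$. Since the number of extra colours is bounded by a function of $r$ and $s$ only, the resulting constant $\alpha(r,s,i)$ still depends only on $r,s,i$, which is all that is claimed.
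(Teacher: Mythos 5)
Your overall strategy is the same as the one the paper uses (for the sphere version, Corollary 1, adapted from the proof of Lemma 1 in the cited reference): induct on $i$, push an independent set one level toward $c$ along shortest paths, bound the fibres of the push-down map by $K_{1,r}$-freeness, and apply Ramsey to the pushed-down set so that a large independent set contradicts the induction hypothesis while a large clique is supposed to produce an induced $K_s^h$. However, the step you yourself flag as the "main obstacle" is a genuine gap, and it is exactly the point where your parameters are too weak. With $R(s,\alpha(r,s,i-1))$ you only obtain a clique of size $s$ in $M'$, and a clique of size $s$ together with one pre-image per clique vertex does not yield an \emph{induced} $K_s^h$: a pre-image $v\in M$ may be adjacent to the successor $w'$ of a different $w\in M$ (both live at consecutive distance levels, so nothing forbids this edge), and then the pendant structure is destroyed. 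Since you have no freedom left (exactly one candidate pendant per clique vertex), the argument cannot be repaired locally, so as written the inductive step does not go through.

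The paper closes this hole quantitatively rather than by re-colouring: it uses the Ramsey number $R\bigl(s(2r-3),\alpha^0(r,s,i-1)\bigr)$, i.e. it extracts a clique $K_{s(2r-3)}$ in the pushed-down level, and then exploits that each clique vertex has at most $r-2$ neighbours in the top level $M^0$ (its successor toward $c$ supplies the extra leaf of a would-be $K_{1,r}$) to run a matching/greedy selection inside this larger clique that picks out $s$ clique vertices with \emph{private} pendant neighbours, giving the induced $K_s^h$; this is why the surplus factor $2r-3$ appears. Your second suggested fix (a multicolour Ramsey colouring of pairs recording the cross-adjacencies $v$--$w'$, $w$--$v'$, $v'$--$w'$) can indeed be made to work — monochromatic classes with a cross-edge pattern are killed by $K_{1,r}$-freeness, the non-adjacent-successor pattern by the induction hypothesis, and the clean pattern yields the induced $K_s^h$ — but you only gesture at it; you would also have to handle coincident successors and vertices of $M$ at distance less than $i$ explicitly. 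So the proposal is the right approach with the decisive step missing; either carry out the multicolour Ramsey argument in detail or, as in the paper, enlarge the clique to $s(2r-3)$ and argue via a matching.
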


We use the proof of Lemma~\ref{lem2} to get the following corollary concerning $\alpha_i^0(G,c)$ for each integer $i\geq 1$.

\begin{cor}\label{cor}
Let $r,s,i\in\mathbb{N}$. Then there is a constant $\alpha^0(r,s,i)$ such that, for every connected $(K_{1,r},K_s^h)$-free graph $G$ and for every $c\in V(G)$, $\alpha_i^0(G,c)< \alpha^0(r,s,i)$.
\end{cor}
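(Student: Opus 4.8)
The plan is to extract the relevant structural statement from the proof of Lemma~\ref{lem2} rather than re-prove anything from scratch. Recall that $\alpha_i(G,c)$ bounds independent sets inside the \emph{closed} ball $N_G^i[c]=\bigcup_{j=1}^i N_G^j(c)\cup\{c\}$, whereas $\alpha_i^0(G,c)$ bounds independent sets inside the \emph{sphere} $N_G^i(c)$. Since $N_G^i(c)\subseteq N_G^i[c]$, every independent set counted by $\alpha_i^0(G,c)$ is also counted by $\alpha_i(G,c)$, and hence trivially $\alpha_i^0(G,c)\le\alpha_i(G,c)<\alpha(r,s,i)$. So one can simply set $\alpha^0(r,s,i):=\alpha(r,s,i)$ and the corollary follows immediately from Lemma~\ref{lem2}.

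First I would state this containment explicitly: for any $c\in V(G)$ and any $i\ge 1$, if $M^0\subseteq N_G^i(c)$ is independent then $M^0\subseteq N_G^i[c]$ is independent, whence $|M^0|\le\alpha_i(G,c)$; taking the maximum over all such $M^0$ gives $\alpha_i^0(G,c)\le\alpha_i(G,c)$. Then I would invoke Lemma~\ref{lem2} to conclude $\alpha_i^0(G,c)\le\alpha_i(G,c)<\alpha(r,s,i)$, and declare $\alpha^0(r,s,i):=\alpha(r,s,i)$. This is the entire argument; no delicate estimate is needed, since the sphere is a subset of the ball.

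The phrase ``We use the proof of Lemma~\ref{lem2}'' in the excerpt suggests the authors may want a sharper or more self-contained bound (for instance, a bound obtained by tracking only the vertices at exact distance $i$ through the Ramsey-type argument underlying Lemma~\ref{lem2}), but for the purposes of the later sections only \emph{finiteness} of the constant matters, so the cheap inclusion bound suffices. If one did want to mirror the internal proof, the key steps would be: fix a maximal independent set $M^0$ in $N_G^i(c)$; for each $v\in M^0$ pick a shortest $c$--$v$ path and look at its vertex at distance $1$ from $c$, i.e.\ in $N_G(c)$; group the vertices of $M^0$ by this ``anchor'' neighbor; since $G$ is $K_{1,r}$-free there are boundedly many anchors that are pairwise nonadjacent, and $K_s^h$-freeness together with a Ramsey argument bounds how many elements of $M^0$ can share comparable anchors — reproducing exactly the mechanism in Lemma~\ref{lem2}.

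The main (and only) obstacle is essentially cosmetic: deciding whether the paper wants the trivial inclusion bound $\alpha^0(r,s,i)=\alpha(r,s,i)$ or a proof that literally reuses the machinery of Lemma~\ref{lem2}; since the excerpt says ``we use the proof of Lemma~\ref{lem2},'' I would present the argument as: observe that the proof of Lemma~\ref{lem2} in fact bounds the size of any independent set of vertices lying in $N_G^i[c]$, and in particular any independent set in $N_G^i(c)$, which immediately yields the claimed constant $\alpha^0(r,s,i)$. Either way the content is a one-line deduction from the already-stated lemma, so there is no real difficulty to overcome.
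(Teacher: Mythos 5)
Your main argument is correct: since $N_G^i(c)\subseteq N_G^i[c]$, any independent set in the sphere is an independent set in the closed ball, so $\alpha_i^0(G,c)\le\alpha_i(G,c)<\alpha(r,s,i)$ and one may take $\alpha^0(r,s,i):=\alpha(r,s,i)$. This does prove the corollary as stated, but it is a genuinely different route from the paper's. The paper deliberately re-runs the inductive Ramsey machinery of Lemma~\ref{lem2} restricted to the sphere $N_G^i(c)$: it takes a maximum independent set $M^0\subseteq N_G^i(c)$, passes to the successors of its vertices (at distance $i-1$ from $c$) along shortest paths to $c$, uses $K_{1,r}$-freeness to show each successor absorbs at most $r-2$ vertices of $M^0$, then uses the Ramsey number $R(s(2r-3),\alpha^0(r,s,i-1))$ together with the induction hypothesis to extract a clique $K_{s(2r-3)}$ among the successors and a matching into $M^0$ producing an induced $K_s^h$, a contradiction. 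The payoff of this longer proof is quantitative: it yields the explicit recursion $\alpha_1^0(G,c)\le r-1$ and $\alpha_i^0(G,c)\le (r-2)R\bigl(s(2r-3),\alpha^0(r,s,i-1)\bigr)-1$, which the proof of Theorem~\ref{thm9} invokes verbatim (``from the proof of Corollary 1\dots''); your inclusion bound gives finiteness but not these constants, so the later computation would have to be rephrased in terms of the unspecified $\alpha(r,s,i)$ from \cite{P.2}. One small correction to your optional sketch of the mirrored argument: the ``anchor'' of a vertex of $M^0$ is its successor on a shortest path to $c$, i.e.\ a vertex of $N_G^{i-1}(c)$, not the path's vertex in $N_G(c)$; the induction is on $i$ and the Ramsey step is applied to that successor set, not to neighbors of $c$.
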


\begin{proof}
For the sake of completeness, here we give a brief proof concentrating on the upper bound of $\alpha_i^0(G,c)$. We prove the corollary by induction on $i$.

For $i=1$, we have $\alpha^0(r,s,1)=r$, for otherwise $G$ contains a $K_{1,r}$ as an induced subgraph.

Let, to the contrary, $i$ be the smallest integer for which $\alpha^0(r,s,i)$ does not exist(i.e., $\alpha_i^0(G,c)$ can be arbitrarily large), choose a graph $G$ and a vertex $c\in V(G)$ such that $\alpha_i^0(G,c)\geq (r-2)R(s(2r-3),\alpha^0(r,s,i-1))$, and let $M^0=\{x_1^0,\dots,x_k^0\}\subseteq N_G^i(c)$ be an independent set in $G$ of size $\alpha_i^0(G,c)$. Obviously, $k\geq(r-2)R(s(2r-3),\alpha^0(r,s,i-1))$.
Let $Q_j$ be a shortest $(x_j^0,c)$-path in $G$, $j=1,\dots,k$. We denote $M^1\subseteq N_G^{i-1}(c)$ the set of all successors of the vertices from $M^0$ on $Q_j$, $j=1,\dots,k$, and $x_j^1$ the successor of $x_j^0$ on $Q_j$ (note that some distinct vertices in $M^0$ can have a common successor in $M^1$). Every vertex in $M^1$ has at most $r-2$ neighbors in $M^0$ since $G$ is $K_{1,r}$-free. Thus, $|M^1|\geq \frac{k}{r-2}\geq R(s(2r-3),\alpha^0(r,s,i-1))$. By the induction assumption and the definition of Ramsey number, $G[M^1]$ contains a complete subgraph $K_{s(2r-3)}$. Choose the notation such that $V(K_{s(2r-3)})=\{x_1^1,\dots,x_{s(2r-3)}^1\}$, and set $\widetilde{M^0}=N_{M^0}(K_{s(2r-3)})$. Using a matching between $K_{s(2r-3)}$ and $\widetilde{M^0}$, we can find in $G$ an induced $K_s^h$ with vertices of degree 1 in $\widetilde{M^0}$, a contradiction. For more details about finding the $K_s^h$, we refer the reader to~\cite{P.2}.
\end{proof}

Armed with Corollary 1, we can get the following important theorem.

\begin{thm}\label{thm9}
Let $r\geq 3, s\geq 3$, and $\ell>4$ be fixed integers. Then there is a constant $C(r,s,\ell)$ such that every connected $(K_{1,r},K_s^h,P_\ell)$-free graph $G$ satisfies $rx_3(G)\leq sdiam_3(G)+C(r,s,\ell)$.
\end{thm}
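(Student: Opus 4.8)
\textbf{Proof proposal for Theorem~\ref{thm9}.}

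The plan is to show that a connected $(K_{1,r},K_s^h,P_\ell)$-free graph $G$ admits a small ``core'' — a bounded-size set of vertices close to a central vertex — that is simultaneously a connected dominating set and can be $3$-rainbow-colored with boundedly many colors, while every other vertex is reached from the core by a short (bounded-length) path. First I would fix a central vertex $c$ of $G$, so that every vertex lies within $\mathrm{rad}(G)\le \mathrm{diam}(G)$ of $c$; since $G$ contains no induced $P_\ell$, it has bounded diameter only in a \emph{local} sense, so instead the key structural input is Corollary~\ref{cor}: for each $i\le \ell-2$ the set $N_G^i(c)$ has independence number below $\alpha^0(r,s,i)$, hence (because $G$ is also $K_{1,r}$-free, so cliques in any sphere are bounded via Ramsey) the sphere $N_G^i(c)$ can be covered by a bounded number of cliques. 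Using the $P_\ell$-freeness, I would argue that $N_G^{\ge \ell-1}(c)=\emptyset$, i.e. $\mathrm{ecc}(c)\le \ell-2$, because a geodesic from $c$ to a far vertex, together with the absence of chords forced by $K_{1,r}$-freeness and $K_s^h$-freeness on the geodesic's neighborhood, would produce an induced $P_\ell$. Consequently $V(G)=\bigcup_{i=0}^{\ell-2}N_G^i(c)$ and, combining the clique-cover bounds over all $i\le \ell-2$, the whole vertex set is covered by at most some $M=M(r,s,\ell)$ cliques.

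Next I would extract from this clique cover a connected dominating set $D$ of bounded size: pick one vertex from each clique, then connect these representatives through short paths to $c$ along geodesics; since there are at most $M$ representatives and each geodesic has length $\le \ell-2$, we get $|D|\le M(\ell-1)$, a constant. Then I would $3$-rainbow-color $G$ in two layers, imitating the argument in Theorem~\ref{thm7}. The ``inside'' layer colors $G[D]$ — or rather a bounded-size connected spanning structure on the representatives — using a fresh palette of constant size $c_1(r,s,\ell)$; here the point is that any bounded graph has bounded $3$-rainbow index (crudely, $rx_3(H)\le |V(H)|-1$ by Observation~\ref{obs1}). The ``outside'' layer handles vertices $v\notin D$: each such $v$ sits in a clique $Q_v$ of the cover whose representative $d_v\in D$ is at bounded distance; I would color the edges from each clique to its representative, and the short attaching paths, using colors indexed by which clique and which ``level'' the vertex belongs to, again a constant-size palette. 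Finally, for an arbitrary triple $S=\{u,v,w\}$, I would build a rainbow $S$-tree by routing $u,v,w$ to their clique representatives $d_u,d_v,d_w\in D$ via the outside-layer colors (distinct across the three branches because the clique indices differ, exactly as in cases (iii)(b) and (iv)(b) of Theorem~\ref{thm7}), then joining $d_u,d_v,d_w$ inside $D$ with a rainbow $\{d_u,d_v,d_w\}$-tree from the inside palette; disjointness of the two palettes makes the union rainbow. Counting, the total number of colors is bounded by a constant $C(r,s,\ell)$, and since $sdiam_3(G)\ge 2$ always, this yields $rx_3(G)\le sdiam_3(G)+C(r,s,\ell)$.

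The main obstacle I expect is making the clique-cover-of-spheres argument genuinely bounded and then genuinely usable: Corollary~\ref{cor} bounds the \emph{independence number} of each sphere $N_G^i(c)$, but converting this to a bounded \emph{clique cover} requires a Ramsey-type step (a graph on $n$ vertices with independence number $<a$ and no clique of size $b$ has $n<R(a,b)$, so bounded independence plus $K_s^h$-freeness — which limits clique size in the relevant auxiliary graph — gives a bounded sphere, hence a bounded clique cover). One must be careful that ``$K_s^h$-free'' controls clique sizes in the right auxiliary construction (the one used inside the proof of Corollary~\ref{cor}), and that the bound $\mathrm{ecc}(c)\le \ell-2$ really follows from $P_\ell$-freeness together with the no-long-induced-path consequences of the other two forbidden subgraphs — this is where the interplay of all three forbidden graphs is essential and where the technical heart of the argument lies. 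Once the bounded connected dominating set $D$ is in hand, the coloring and tree-construction steps are routine adaptations of Theorem~\ref{thm7}, differing only in bookkeeping of the constant-size palettes.
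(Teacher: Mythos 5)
Your overall scaffolding (central vertex $c$, $ecc(c)\le \ell-2$, Corollary~\ref{cor} as the structural input, then a layered coloring imitating Theorem~\ref{thm7}) matches the paper's strategy, but the load-bearing step in your argument is wrong. You convert the bounded independence number of each sphere $N_G^i(c)$ into ``bounded sphere, hence bounded clique cover'' by a Ramsey argument, claiming that $K_s^h$-freeness limits clique sizes. It does not: a complete graph contains no induced $K_s^h$ (and no induced $K_{1,r}$ or $P_\ell$ either), so spheres can be arbitrarily large --- e.g.\ $K_n$ with a universal vertex $c$ has $N_G^1(c)=K_{n-1}$. The clique-size bound in the proof of Corollary~\ref{cor} applies only to the auxiliary successor set $M^1$ of an independent set, not to spheres. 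Nor does bounded independence number of a sphere give a bounded clique cover of it: clique cover number of $H$ equals $\chi(\overline H)$, and bounded clique number does not bound chromatic number. Concretely, let $H$ be the complement of a triangle-free graph of huge chromatic number and add a universal vertex $c$; the resulting graph has independence number $2$, hence is $(K_{1,r},K_s^h,P_\ell)$-free for all $r,s\ge 3$, $\ell\ge 5$, yet $N_G^1(c)=H$ has arbitrarily large clique cover number. So the bounded family of clique representatives, the bounded connected dominating set $D$ built from them, and crucially the ``clique index'' colors whose distinctness you invoke in the final tree construction (when the three vertices lie in different cliques, and adjacency handles coincidences) are simply not available. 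Replacing cliques by an arbitrary bounded dominating set does not repair this: two non-adjacent vertices may share their only dominator, and then your three routing edges are not rainbow --- this is exactly the situation where Theorem~\ref{thm7} needed the Hall-type property $|N_X(W)|\ge |W|$ of a \emph{maximum} independent set, which your setup does not provide.

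The paper avoids all of this by never bounding the spheres or covering them with cliques. It sets $S_i=\cup_{j=1}^i N_G^j[c]$ and proves the claim $rx_3(G[S_i\cup N_G^{i+1}(c)])\le rx_3(G[S_i])+\alpha_{i+1}^0(G,c)+3$: inside the sphere $N_G^{i+1}(c)$ it takes a maximum independent set $X$ (of size $<\alpha^0(r,s,i+1)$ by Corollary~\ref{cor}), uses its maximality to dominate the rest of the sphere and to get the Hall-type condition, and reruns the Theorem~\ref{thm7} coloring with roughly $|X|+3$ fresh colors per layer; summing over the at most $\ell-2$ layers gives the constant. (A small additional remark: $ecc(c)\le \ell-2$ needs only $P_\ell$-freeness, since shortest paths are induced; no interplay with the other two forbidden subgraphs is required there.) To salvage your write-up you would have to replace the clique-cover core with this per-layer maximum-independent-set argument, at which point it becomes the paper's proof.
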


{\noindent\bf Proof.}
We have $diam(G)\leq \ell-2$ since $G$ is $P_\ell$-free.
Let $c$ be a central vertex of $G$, i.e., $ecc(c)=rad(G)\leq diam(G)\leq \ell-2$. And we set $S_i=\cup_{j=1}^iN_G^j[c]$ for an integer $i\geq 1$.

{\bf Claim: $rx_3(G[S_i\cup N_G^{i+1}(c)])\leq rx_3(G[S_i])+\alpha_{i+1}^0(G,c)+3$}
\begin{proof}
Let $X=\{x_1,x_2,\dots,x_{\alpha_{i+1}^0(G,c)}\}$ be the maximum independent set of $N_G^{i+1}(c)$ and
$Y=N_G^{i+1}(c)\setminus X$. Then for any vertex $y\in Y$, $y$ is adjacent to
some $x\in X$ and $s\in S$. Further more, for any independent set $W$ of graph
$G[Y]$, we have $|N_X(W)| \geq |W|$ since $X$ is maximum.

Now we demonstrate a $3$-rainbow coloring of $G[S_i\cup N_G^{i+1}(c)]$
using at most $k+\alpha_{i+1}^0(G,c)+3$ colors, where $k=rx_3(G[S_i])$.
We color the edges of $G[S_i]$ using colors $1,2,\dots,k$.
Color $E[S_i,Y]$ with color $k+1$
and $E(G[Y])$ with color $k+2$.
And assign color $j+k+2$ to the edges $E[\{x_j\},S_i]$, and $j+k+3$
to the edges $E[\{x_j\},Y]$ where $1\leq j \leq \alpha_{i+1}^0(G,c)$.
With the same argument as the proof of Theorem 7, we can prove that this coloring is a $3$-rainbow coloring
of $G[S_i\cup N_G^{i+1}(c)]$.
\end{proof}

From the proof of Corollary 1, it follows that $\alpha_1^0(G,c)\leq r-1$ and $\alpha_i^0(G,c)\leq (r-2)R(s(2r-3),\alpha^0(r,s,i-1))-1$ for each integer $i\geq 2$.
Let $\mathcal{R}(r,s)=\Sigma_{i=2}^{ecc(c)}R(s(2r-3),\alpha^0(r,s,i-1))$.
Recall that $ecc(c)\leq \ell-2$.
Repeated application of Claim gives the following:

$rx_3(G)\leq rx_3(G[N_G^{ecc(c)-1}[c]])+\alpha_{ecc(c)}^0(G,c)+3$

$\ \ \ \ \ \ \ \ \ \ \leq\dots$

$\ \ \ \ \ \ \ \ \ \ \leq rx_3({c})+\alpha_1^0(G,c)+\dots+\alpha_{ecc(c)}^0(G,c)+3ecc(c)$

$\ \ \ \ \ \ \ \ \ \ \leq 0+r+(r-2)\mathcal{R}(r,s)+2(\ell-2)$

$\ \ \ \ \ \ \ \ \ \ \leq sdiam_3(G)+(r-2)(\mathcal{R}(r,s)+1)+2(\ell-1)$.

Thus, we complete our proof. \ \ \ \ \ \ \ \ \ \ \ \ \ \ \ \  \ \ \ \ \ \ \ \ \ \ \ \
\ \ \  \ \ \ \ \ \ \ \ \ \ \ \ \ \ \ \ \ \ \ \ \ \ \ \ \ \ \ \ \ \ \ \ \ \ \ $\blacksquare$

{\noindent\bf Remark} The same as the remark in Section 4: for $i\geq1$, every time $\alpha_{i+1}^0(G,c)\geq4$ happens, we can save one color in the Claim of Theorem~\ref{thm9}.

\section{Forbidden $k$-tuples for any $k\in\mathbb{N}$}

Let $\mathcal{F}=\{X_1,X_2,X_3,\dots,X_k\}$ be a finite family of connected graphs with $k\geq4$ for which there is a constant $k_\mathcal{F}$ such that every connected $\mathcal{F}$-free graph satisfies $rx_3(G)\leq sdiam_3(G)+ C_\mathcal{F}$.
Let $t$ be an arbitrarily large integer, and let $G_1^t, G_2^t$ and $G_3^t$
be defined in Proposition 2.
For the graph $G_1^t$, Up to symmetry, we suppose that $X_1=K_r, r\geq3$ (for the case $r=2$ has been discussed in Section 3). Then, we consider the graphs $G_2^t$ and $G_3^t$.
Notice that $G_2^t$ and $G_3^t$ are both $K_{1,3}$-free, so neither of them contains $X_1$,
implying that $G_2^t$ or $G_3^t$ contains $X_i$, where $i\neq 1$.
We may assume that $X_2$ is an induced subgraph of $G_2^t$.
If $G_3^t$ contains $X_2$, then $X_2=P_4$, which is just
the case in Section 4. So we turn to the case that $G_3^t$ contains $X_i$ for some $i>2$.
Now consider the graphs $G_3^t, G_3^{t+1}, G_3^{t+2}, \dots, G_3^{t+k}$, each of which contains at least one of
$X_3, X_4, \dots, X_k$ as an induced subgraph due to
the analysis above. So it is forced that at least one of these $X_i(i\geq3)$ is isomorphic to $P_l$ for some
$l\geq 5$, which goes back to the case in Section 5.
Thus, the conclusion comes out.

\begin{thm}\label{thm10}
Let $\mathcal{F}$ be a finite family of connected graphs. Then there is a constant
$C_{\mathcal{F}}$ such that every connected $\mathcal{F}$-free graph satisfies $rx_3(G)\leq sdiam_3(G)+C_{\mathcal{F}}$,
if and only if $\mathcal{F}$ contains a subfamily $\mathcal{F'}\in \mathfrak{F}_1\cup \mathfrak{F}_2\cup
\mathfrak{F}_3$.
\end{thm}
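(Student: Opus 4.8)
\noindent\emph{Proof plan.} The plan is to establish the two directions separately: the backward implication is immediate from the results already proved, and the forward implication extends the necessity arguments of Propositions~\ref{pro2} and~\ref{pro3} to an arbitrary finite family. For the ``if'' direction, suppose $\mathcal{F}$ contains a subfamily $\mathcal{F}'\in\mathfrak{F}_1\cup\mathfrak{F}_2\cup\mathfrak{F}_3$. Since $\mathcal{F}'\subseteq\mathcal{F}$, every connected $\mathcal{F}$-free graph is in particular $\mathcal{F}'$-free (forbidding more induced subgraphs only shrinks the class), so it suffices to exhibit a constant for $\mathcal{F}'$-free graphs. If $\mathcal{F}'=\{P_3\}$ this is Theorem~\ref{thm5}; if $\mathcal{F}'=\{K_{1,r},P_4\}$ this is Theorem~\ref{thm7} (equivalently Theorem~\ref{thm6}); and if $\mathcal{F}'=\{K_{1,r},Y,P_\ell\}$ with $Y\overset{\scriptscriptstyle{\text{IND}}}{\subseteq}K_s^h$, then a $Y$-free graph cannot contain $K_s^h$ as an induced subgraph, so every $\mathcal{F}'$-free graph is $(K_{1,r},K_s^h,P_\ell)$-free and Theorem~\ref{thm9} applies. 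Taking $C_\mathcal{F}$ to be the corresponding constant finishes this direction (and in particular recovers Theorems~\ref{thm5}, \ref{thm6} and~\ref{thm8} when $|\mathcal{F}|\le 3$).

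For the ``only if'' direction, assume a constant $C_\mathcal{F}$ exists and write $\mathcal{F}=\{X_1,\dots,X_k\}$. I would test $\mathcal{F}$-freeness against the three families used in Sections~3--5: $G_1^t=K_{1,t}$, $G_2^t=K_t^h$, and $G_3^t=C_t$. Using Proposition~\ref{pro1}, Theorem~\ref{thm1}, Theorem~\ref{thm2} and the estimates recorded in Proposition~\ref{pro2}, each of these families satisfies $rx_3(G_i^t)-sdiam_3(G_i^t)\to\infty$; hence for all large $t$ none of $G_1^t,G_2^t,G_3^t$ can be $\mathcal{F}$-free, so each of them contains an induced copy of some member of $\mathcal{F}$. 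Since $k$ is finite, a pigeonhole argument over $t$ produces a member, say $X_1$, with $X_1\overset{\scriptscriptstyle{\text{IND}}}{\subseteq}K_{1,t}$ for infinitely many $t$; as the only connected induced subgraphs of a star are stars, $X_1=K_{1,r}$, and if $r\le 2$ then $P_3\in\mathcal{F}$ and we already have a subfamily in $\mathfrak{F}_1$, so assume $r\ge 3$. Both $G_2^t$ and $G_3^t$ are $K_{1,3}$-free, hence $K_{1,r}$-free, so for large $t$ they must contain members of $\mathcal{F}$ other than $X_1$; applying pigeonhole to $G_2^t$ yields a member $X_2$ with $X_2\overset{\scriptscriptstyle{\text{IND}}}{\subseteq}K_t^h$ for infinitely many $t$, hence $X_2\overset{\scriptscriptstyle{\text{IND}}}{\subseteq}K_s^h$ for some $s\ge 3$ (the cases $|V(X_2)|\le 2$ being trivial).

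To finish I would locate the forbidden path. Because $\mathcal{F}$ is finite, if no member of $\mathcal{F}$ were an induced subgraph of $C_t$ for infinitely many $t$, then $C_t$ would be $\mathcal{F}$-free for all large $t$, contradicting $rx_3(C_t)-sdiam_3(C_t)\to\infty$; so some member $X_j$ satisfies $X_j\overset{\scriptscriptstyle{\text{IND}}}{\subseteq}C_t$ for infinitely many $t$. The connected induced subgraphs of a long cycle are exactly the paths, so $X_j=P_\ell$: if $\ell\le 3$ we again meet $\mathfrak{F}_1$, if $\ell=4$ then $\{K_{1,r},P_4\}\subseteq\mathcal{F}$ lies in $\mathfrak{F}_2$, and if $\ell\ge 5$ then $\{K_{1,r},X_2,P_\ell\}$ is the required triple in $\mathfrak{F}_3$. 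The step I expect to be the main obstacle is precisely the bookkeeping here: one must verify that $K_{1,r}$, $X_2$ and $P_\ell$ really are three distinct members of $\mathcal{F}$ and that every overlapping degenerate situation --- when $X_2$ is itself a short path, when the path found in $C_t$ coincides with $X_2$, when $K_t^h$ hosts only already-identified members, and so on --- is correctly routed into $\mathfrak{F}_1$, $\mathfrak{F}_2$ or $\mathfrak{F}_3$. The structural facts doing the work are that $K_t^h$ and $C_t$ are $K_{1,3}$-free and that $K_t^h$ is $P_5$-free; the latter holds because an interior vertex of an induced path in $K_t^h$ has at least two neighbours and so must lie in the clique, while three consecutive clique vertices are mutually adjacent, which is impossible in an induced path. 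Granting this case analysis, the characterization of Theorem~\ref{thm10} follows.
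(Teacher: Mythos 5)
Your proposal is correct and follows essentially the same route as the paper: the sufficiency is read off from Theorems~\ref{thm5}, \ref{thm7} and~\ref{thm9}, and the necessity uses the same test families $G_1^t=K_{1,t}$, $G_2^t=K_t^h$, $G_3^t=C_t$ with a pigeonhole over $t$ to extract $K_{1,r}$, a member inside $K_s^h$, and a path, exactly as in the paper's Section~6 argument (which considers the cycles $G_3^t,\dots,G_3^{t+k}$ for the same purpose). Your write-up is in fact somewhat more explicit than the paper's sketch, e.g.\ in spelling out the ``if'' direction and the $P_5$-freeness of $K_t^h$ that guarantees the three extracted graphs are distinct.
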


\end{document}